%File: anonymous-submission-latex-2026.tex
\documentclass[letterpaper]{article} % DO NOT CHANGE THIS
\usepackage{aaai2026}  % DO NOT CHANGE THIS
\usepackage{times}  % DO NOT CHANGE THIS
\usepackage{helvet}  % DO NOT CHANGE THIS
\usepackage{courier}  % DO NOT CHANGE THIS
\usepackage[hyphens]{url}  % DO NOT CHANGE THIS
\usepackage{graphicx} % DO NOT CHANGE THIS
\urlstyle{rm} % DO NOT CHANGE THIS
  % DO NOT CHANGE THIS
\usepackage{natbib}  % DO NOT CHANGE THIS AND DO NOT ADD ANY OPTIONS TO IT
\usepackage{caption} % DO NOT CHANGE THIS AND DO NOT ADD ANY OPTIONS TO IT
\frenchspacing  % DO NOT CHANGE THIS
\setlength{\pdfpagewidth}{8.5in} % DO NOT CHANGE THIS
\setlength{\pdfpageheight}{11in} % DO NOT CHANGE THIS
%
% These are recommended to typeset algorithms but not required. See the subsubsection on algorithms. Remove them if you don't have algorithms in your paper.
\usepackage{algorithm}
\usepackage{algorithmic}

%
% These are are recommended to typeset listings but not required. See the subsubsection on listing. Remove this block if you don't have listings in your paper.
\usepackage{newfloat}
\usepackage{listings}
\DeclareCaptionStyle{ruled}{labelfont=normalfont,labelsep=colon,strut=off} % DO NOT CHANGE THIS
\lstset{%
	basicstyle={\footnotesize\ttfamily},% footnotesize acceptable for monospace
	numbers=left,numberstyle=\footnotesize,xleftmargin=2em,% show line numbers, remove this entire line if you don't want the numbers.
	aboveskip=0pt,belowskip=0pt,%
	showstringspaces=false,tabsize=2,breaklines=true}
\floatstyle{ruled}
\newfloat{listing}{tb}{lst}{}
\floatname{listing}{Listing}
%
% Keep the \pdfinfo as shown here. There's no need
% for you to add the /Title and /Author tags.
\pdfinfo{
	/TemplateVersion (2026.1)
}

\newtheorem{theorem}{Theorem}[section]

\newtheorem{corollary}[theorem]{Corollary}

\newenvironment{proof}{\noindent\textit{Proof.}}{\hfill$\square$}
 % Define a label for your theorem

% My packages
\usepackage{amsmath}   % for \overset, \text
\usepackage{amssymb}   % for \triangleq, \mathbb
\usepackage{amsfonts}  % for \mathbb (optional; amssymb usually includes it)
\usepackage{xcolor}
\usepackage{subfig}

\setcounter{secnumdepth}{0} %May be changed to 1 or 2 if section numbers are desired.

% The file aaai2026.sty is the style file for AAAI Press
% proceedings, working notes, and technical reports.
%

% Title

% Your title must be in mixed case, not sentence case.
% That means all verbs (including short verbs like be, is, using,and go),
% nouns, adverbs, adjectives should be capitalized, including both words in hyphenated terms, while
% articles, conjunctions, and prepositions are lower case unless they
% directly follow a colon or long dash
%\title{Bounding Conditional Value-at-Risk via Distributional Discrepancies}
%\title{Bounding Conditional Value-at-Risk via Cumulative Distributions Discrepancies}
\title{Bounding Conditional Value-at-Risk via Auxiliary Distributions with Bounded Discrepancies}
\author{
%Authors
% All authors must be in the same font size and format.
Yaacov Pariente$^1$, Vadim Indelman$^{2,3}$\\
}
\affiliations{
	%Afiliations
	\textsuperscript{\rm 1}Faculty of Mathematics\\%, Technion - Israel Institute of Technology\\
	\textsuperscript{\rm 2}Stephen B. Klein Faculty of Aerospace Engineering\\%, Technion - Israel Institute of Technology\\
	\textsuperscript{\rm 3}Faculty of Data and Decision Sciences\\%, Technion - Israel Institute of Technology\\
	% If you have multiple authors and multiple affiliations
	% use superscripts in text and roman font to identify them.
	% For example,
	
	% Sunil Issar\textsuperscript{\rm 2}, 
	% J. Scott Penberthy\textsuperscript{\rm 3}, 
	% George Ferguson\textsuperscript{\rm 4},
	% Hans Guesgen\textsuperscript{\rm 5}
	% Note that the comma should be placed after the superscript
	% email address must be in roman text type, not monospace or sans serif
	Technion - Israel Institute of Technology\\
	yaacovp@campus.technion.ac.il, vadim.indelman@technion.ac.il
	%
	% See more examples next
}

\iffalse
%Example, Multiple Authors, ->> remove \iffalse,\fi and place them surrounding AAAI title to use it
\title{My Publication Title --- Multiple Authors}
\author {
% Authors
First Author Name\textsuperscript{\rm 1},
Second Author Name\textsuperscript{\rm 2},
Third Author Name\textsuperscript{\rm 1}
}
\affiliations {
% Affiliations
\textsuperscript{\rm 1}Affiliation 1\\
\textsuperscript{\rm 2}Affiliation 2\\
firstAuthor@affiliation1.com, secondAuthor@affilation2.com, thirdAuthor@affiliation1.com
}
\fi

% REMOVE THIS: bibentry
% This is only needed to show inline citations in the guidelines document. You should not need it and can safely delete it.
\usepackage{bibentry}
% END REMOVE bibentry

\begin{document}

\maketitle

\begin{abstract}
	In this paper, we develop a theoretical framework for bounding the CVaR of a random variable $X$ using another related random variable $Y$, under assumptions on their cumulative and density functions. Our results yield practical tools for approximating $\operatorname{CVaR}_\alpha(X)$ when direct information about $X$ is limited or sampling is computationally expensive, by exploiting a more tractable or observable random variable $Y$. Moreover, the derived bounds provide interpretable concentration inequalities that quantify how the tail risk of $X$ can be controlled via $Y$.
\end{abstract}

\section{Introduction}
Conditional Value-at-Risk (CVaR) has emerged as a critical risk measure in modern artificial intelligence, particularly in settings where decision-making under uncertainty and rare but severe events must be rigorously addressed. Unlike traditional expected value objectives, which can overlook tail risks, CVaR explicitly quantifies the expected loss in the worst-case fraction of scenarios, providing a principled framework for robust and risk-sensitive learning and planning. This property makes CVaR especially valuable in reinforcement learning, safe control, and robust optimization, where agents must balance performance with safety guarantees in the presence of model misspecification, adversarial disturbances, or highly uncertain environments. As AI systems increasingly operate in high-stakes domains—from autonomous driving to financial trading—the ability to reason about and control tail risks via CVaR is becoming an essential component of reliable and trustworthy AI.

The Conditional Value at Risk (CVaR) \cite{rockafellar2000optimization} is a widely used risk measure that facilitates the optimization of the upper tail of the cost's distribution. The dual representation of CVaR \cite{artzner1999coherent} enables its interpretation as the worst-case expectation of the cost \cite{chow2015risk}, thereby motivating its use for risk-averse decision making. CVaR is also a coherent risk measure with desirable properties for safe planning \cite{majumdar2020should}, and its estimators have performance guarantees that ensure their reliability in practice \cite{brown2007large,pmlr-v97-thomas19a}.

In this paper, we derive bounds on the CVaR of a random variable $X$, given another random variable $Y$, where the relationship between $X$ and $Y$ is characterized in terms of their cumulative distribution functions (CDFs) and probability density functions (PDFs). These bounds have three key contributions in practice:
\begin{enumerate}
	\item They enable the approximation of $\operatorname{CVaR}_\alpha(X)$ when direct access to $X$ is unavailable, by leveraging a related distribution $Y$ for which information is accessible.
	\item They provide a means to approximate $\operatorname{CVaR}_\alpha(X)$ when sampling from $X$ is computationally costly, by instead using a distribution $Y$ that is less expensive to simulate or estimate.
	\item They lead to interpretable concentration inequalities, encompassing existing concentration bounds for $\operatorname{CVaR}_\alpha(X)$ as a special case. Specifically, they characterize the CVaR of $X$ through the CVaR of $Y$ with an appropriately adjusted confidence level.
\end{enumerate}
Proofs that do not appear in the main text are provided in the supplemental material.

\section{Preliminaries}
\addtocounter{section}{1}
Let $X$ be a random variable defined on a probability space $(\Omega, \mathcal{F}, P)$, where $\mathcal{F} = 2^\Omega$ is the $\sigma$-algebra, and $P : \mathcal{F} \to [0,1]$ is a probability measure. Assume further that $E|X| < \infty$. We denote the cumulative density function (CDF) of the random variable $X$ by $F_X(x)=P(X\leq x)$. The value at risk at confidence level $\alpha \in (0,1)$ is the $1-\alpha$ quantile of $X$, i.e \begin{equation}
	VaR_\alpha (X) \triangleq \inf\{x\in \mathbb{R}:F(x) > 1-\alpha\}
\end{equation}
For simplicity we denote $q_\alpha^X=VaR_\alpha(X)$ or $q_\alpha^F=VaR_\alpha(X)$. The conditional value at risk (CVaR) at confidence level $\alpha$ is defined as \cite{rockafellar2000optimization}
\begin{equation}
	CVaR_\alpha(X) \triangleq \inf_{w\in \mathbb{R}}\{w+\frac{1}{\alpha}\mathbb{E}[(X-w)^+]|w\in \mathbb{R}\},
\end{equation}
where $(x)^+=\max{(x,0)}$. For a smooth $F$, it holds that \cite{pflug2000some}
\begin{equation}\label{def:cvar_as_conditional_expectation}
	CVaR_\alpha(X)=\mathbb{E}[X|X>VaR_\alpha(X)]=\frac{1}{\alpha}\int_{1-\alpha}^1 F^{-1}(v)dv.
\end{equation}
Let $X_i \overset{\text{iid}}{\sim} F$ for $i\in\{1,\dots,n\}$. Denote by
\begin{equation}\label{eq:brown_cvar_estimator}
	\hat{C}_\alpha(X) \triangleq \hat{C}_\alpha(\{X_i\}^{n}_{i=1}) \triangleq \inf_{x\in \mathbb{R}} \Bigl{\{}x+\frac{1}{n\alpha} \sum_{i=1}^n(X_i-x)^+\Bigr{\}}
\end{equation}
the estimate of $CVaR_\alpha(X)$ \cite{brown2007large}. Theorem \ref{thm:brown_bounds}, 
that bounds the deviation of the estimated CVaR and the true CVaR, was proved in \cite{brown2007large}.
\begin{theorem}\label{thm:brown_bounds}
	If $\text{supp}(X) \subseteq [a, b]$ and $X$ has a continuous distribution function, then for any $\delta \in (0, 1]$, 
	\begin{equation}
		P\Bigl{(}CVaR_\alpha(X)-\hat{C}_\alpha(X)>(b-a)\sqrt{\frac{5ln(3/\delta)}{\alpha n}}\Bigr{)}\leq \delta,
	\end{equation}
	\begin{equation}
		P\Bigl{(}CVaR_\alpha(X)-\hat{C}_\alpha(X)<-\frac{(b-a)}{\alpha}\sqrt{\frac{ln(1/\delta)}{2n}}\Bigr{)}\leq \delta.
	\end{equation}
\end{theorem}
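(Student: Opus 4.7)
The plan is to split the two one-sided bounds and treat them separately, with both arguments driven by the fact that the random variables $(X_i - x)^+$ lie in $[0, b-a]$ whenever $X_i \in [a,b]$, so that Hoeffding's inequality is the natural concentration tool. For the lower tail (the second inequality), I would exploit that $\hat{C}_\alpha(X)$ is an infimum over $x$: plugging in the specific deterministic choice $x = q_\alpha^X$ gives
\begin{equation*}
\hat{C}_\alpha(X) \leq q_\alpha^X + \frac{1}{n\alpha}\sum_{i=1}^n (X_i - q_\alpha^X)^+.
\end{equation*}
Combined with the Rockafellar--Uryasev identity $CVaR_\alpha(X) = q_\alpha^X + (1/\alpha)\mathbb{E}[(X - q_\alpha^X)^+]$, the event $CVaR_\alpha(X) - \hat{C}_\alpha(X) < -\epsilon$ forces a one-sided deviation of at least $\alpha\epsilon$ between the sample mean of the $[0,b-a]$-bounded variables $(X_i - q_\alpha^X)^+$ and their common expectation. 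A direct application of Hoeffding's inequality then delivers the claimed $((b-a)/\alpha)\sqrt{\ln(1/\delta)/(2n)}$ rate after inverting for $\epsilon$.

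The upper tail (the first inequality) is substantially more delicate, since the argument above cannot simply be reversed: the empirical minimiser $\hat{x}^\ast$ that achieves $\hat{C}_\alpha(X)$ is itself random, so one cannot just plug a fixed, data-independent point into the infimum. My plan is to rewrite both $CVaR_\alpha(X)$ and $\hat{C}_\alpha(X)$ through the tail-integral representation $CVaR_\alpha(X) = q_\alpha^X + (1/\alpha)\int_{q_\alpha^X}^{b}(1 - F_X(x))\,dx$ and its empirical analogue with $F_X$ replaced by the empirical CDF $\hat{F}_n$, then bound the difference in stages: first, control the pointwise deviation of $\hat{F}_n$ from $F_X$ uniformly (DKW-type), and second, control the deviation of the empirical quantile $q_\alpha^{\hat{F}_n}$ from $q_\alpha^X$. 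The appearance of $\ln(3/\delta)$ and the constant $\sqrt{5}$ in the target bound strongly suggests combining three separately-controlled failure events via a union bound, each at probability $\delta/3$, and then collecting the resulting constants.

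The hard part will be the upper tail, specifically preserving the optimal $\alpha^{-1/2}$ dependence in the rate. A naive substitution of a sup-norm DKW bound would inflate the result by an extra factor of $\alpha^{-1/2}$, because one would be integrating an $O(n^{-1/2})$ pointwise deviation over a region whose $F_X$-mass is $\alpha$; to avoid this one must exploit that the relevant integral lives on the upper $\alpha$-tail, so an $L^1$-type control, rather than a sup-norm bound, is the correct quantity. Concretely, I expect the cleanest route to be a one-sided concentration inequality tailored to $\int_{q}^{b}(\hat{F}_n(x) - F_X(x))\,dx$ localised near $q_\alpha^X$, together with two additional events controlling the two-sided deviation of the empirical quantile. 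Once each of these three events is pushed to probability $\delta/3$ and their contributions are summed, the resulting threshold should match $(b-a)\sqrt{5\ln(3/\delta)/(\alpha n)}$.
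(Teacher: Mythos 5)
This theorem is not proved in the paper at all: it is quoted verbatim from \cite{brown2007large}, and the text explicitly defers to that reference. So there is no in-paper proof to compare against; I can only assess your attempt against the known argument.

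Your lower-tail argument is complete and correct, and it is exactly the standard one: since $F$ is continuous, $q_\alpha^X$ minimises $w \mapsto w + \tfrac{1}{\alpha}\mathbb{E}[(X-w)^+]$, so plugging $w=q_\alpha^X$ into the empirical infimum reduces the event $CVaR_\alpha(X)-\hat{C}_\alpha(X) < -\epsilon$ to a one-sided deviation of size $\alpha\epsilon$ of the sample mean of $(X_i-q_\alpha^X)^+ \in [0,b-a]$, and Hoeffding gives precisely the exponent $\ln(1/\delta)$ at $\epsilon = \tfrac{b-a}{\alpha}\sqrt{\ln(1/\delta)/(2n)}$. For the upper tail, however, what you have written is a roadmap rather than a proof. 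You correctly identify the obstruction (the empirical minimiser is random, so the infimum cannot be evaluated at a fixed point) and correctly diagnose that a naive sup-norm DKW substitution loses a factor of $\alpha^{-1/2}$; but the step that would actually rescue the $\sqrt{1/(\alpha n)}$ rate --- the localised, $L^1$-type one-sided concentration for $\int_{q}^{b}(\hat{F}_n(x)-F_X(x))\,dx$ near $q_\alpha^X$, combined with the two quantile-deviation events --- is stated as something you ``expect'' to work, not derived. That step is the entire technical content of the upper bound in \cite{brown2007large}: the three-event union bound at level $\delta/3$ and the bookkeeping that produces the constant $5$ are exactly where the work lies, and until the localised concentration inequality is stated and proved with explicit constants, the first inequality of the theorem has not been established. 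Your plan is consistent with Brown's actual argument in structure, so I would call this a correct strategy with the decisive lemma missing rather than a wrong approach.
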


\eqref{eq:brown_cvar_estimator} can be expressed as
\begin{equation}\label{eq:cvar_estimator_sorted_sample}
	\hat{C}_\alpha(X)=X^{(n)}-\frac{1}{\alpha}\sum_{i=1}^{n-1} (X^{(i)}-X^{(i-1)})\Bigl{(} \frac{i}{n}-(1-\alpha) \Bigr{)}^+,
\end{equation}
where $X^{(i)}$ is the $i$th order statistic of $X_1,\dots,X_n$ in ascending order \cite{pmlr-v97-thomas19a}. The results presented in Theorems~\ref{thm:thomas_upper_bound} and~\ref{thm:thomas_lower_bound}, following the work of \cite{pmlr-v97-thomas19a}, yield tighter bounds on the CVaR compared to those established by \cite{brown2007large}.
\begin{theorem}\label{thm:thomas_upper_bound}
	If $X_1, \ldots, X_n$ are independent and identically distributed random variables and $\Pr(X_1 \le b) = 1$ for some finite $b$, then for any $\delta \in (0, 0.5]$,
	\begin{equation}
		\begin{aligned}
			&\Pr\Bigg(
			\operatorname{CVaR}_\alpha(X_1)
			\le Z_{n+1}
			- \frac{1}{\alpha} \sum_{i=1}^n (Z_{i+1} - Z_i) \\
			&\times 
			\Bigg(
			\frac{i}{n} - \sqrt{\frac{\ln(1/\delta)}{2n}}
			- (1 - \alpha)\Bigg)^{+}
			\Bigg) \ge 1 - \delta,
		\end{aligned}
	\end{equation}
	where $Z_1, \ldots, Z_n$ are the order statistics (i.e., $X_1, \ldots, X_n$ sorted in ascending order), $Z_{n+1} = b$, and $x^+ \triangleq  \max\{0, x\}$ for all $x \in \mathbb{R}$.
\end{theorem}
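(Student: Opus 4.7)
The plan is to combine the one-sided Dvoretzky--Kiefer--Wolfowitz (DKW) inequality (in Massart's tight form) with the quantile representation $\operatorname{CVaR}_\alpha(X) = \frac{1}{\alpha}\int_{1-\alpha}^1 F^{-1}(v)\,dv$ recalled in \eqref{def:cvar_as_conditional_expectation}. Let $\hat{F}_n(x) = \frac{1}{n}\sum_{i=1}^n \mathbb{1}[X_i \le x]$ and set $\epsilon \triangleq \sqrt{\ln(1/\delta)/(2n)}$. The one-sided DKW bound gives $\Pr(\sup_x(\hat{F}_n(x) - F(x)) > \epsilon) \le e^{-2n\epsilon^2} = \delta$, so on an event $\mathcal{E}$ of probability at least $1-\delta$ we have $F(x) \ge \hat{F}_n(x) - \epsilon$ for every $x$.

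The next step is to convert this uniform CDF band into a pointwise quantile bound. Extend $\hat{F}_n^{-1}$ by the convention $\hat{F}_n^{-1}(u) \triangleq Z_{n+1} = b$ for $u > 1$, which is admissible because $\Pr(X_1 \le b) = 1$. On $\mathcal{E}$, for each $v \in (0,1)$ any $x$ with $\hat{F}_n(x) \ge v + \epsilon$ also satisfies $F(x) \ge v$, hence
$$F^{-1}(v) = \inf\{x : F(x) \ge v\} \le \inf\{x : \hat{F}_n(x) \ge v+\epsilon\} = \hat{F}_n^{-1}(v+\epsilon).$$
Integrating this pointwise inequality over $v \in (1-\alpha, 1)$ and substituting $u = v + \epsilon$ yields, on $\mathcal{E}$,
$$\operatorname{CVaR}_\alpha(X_1) \le \frac{1}{\alpha}\int_{1-\alpha+\epsilon}^{1+\epsilon} \hat{F}_n^{-1}(u)\,du = \frac{1}{\alpha}\int_{1-\alpha+\epsilon}^1 \hat{F}_n^{-1}(u)\,du + \frac{Z_{n+1}\epsilon}{\alpha},$$
where the splitting tacitly assumes $\epsilon \le \alpha$ (the degenerate case $\epsilon > \alpha$ collapses all the $(\cdot)^+$ coefficients to zero and reduces the claim to the trivial bound $\operatorname{CVaR}_\alpha(X_1) \le b = Z_{n+1}$).

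The remaining step is a closed-form evaluation of the step-function integral. Since $\hat{F}_n^{-1}$ equals $Z_i$ on $(\tfrac{i-1}{n}, \tfrac{i}{n}]$, writing $w_i \triangleq (\tfrac{i}{n} - (1-\alpha) - \epsilon)^+$ with $w_0 = 0$ gives $\int_{1-\alpha+\epsilon}^1 \hat{F}_n^{-1}(u)\,du = \sum_{i=1}^n Z_i(w_i - w_{i-1})$, as the case analysis on whether $\tfrac{i-1}{n}$ or $1-\alpha+\epsilon$ is larger exactly recovers the interval lengths. Abel summation then rewrites this as $Z_n w_n - \sum_{i=1}^{n-1}(Z_{i+1}-Z_i)\,w_i$, and merging with the boundary term $Z_{n+1}\epsilon/\alpha$ and using $w_n = \alpha - \epsilon$ produces precisely
$$Z_{n+1} - \frac{1}{\alpha}\sum_{i=1}^n (Z_{i+1}-Z_i)\,w_i,$$
which is the theorem's bound.

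The DKW invocation is standard and the quantile inversion is immediate from monotonicity, so I expect the main obstacle to be the endpoint bookkeeping in the last step: the mass on $(1, 1+\epsilon]$, where $\hat{F}_n^{-1}$ has been capped at $b = Z_{n+1}$, must be combined with the $i=n$ contribution of the Abel sum so that the $Z_n$ and $Z_{n+1}$ terms reorganize into the single clean coefficient $\frac{1}{\alpha}(Z_{n+1}-Z_n)\,w_n$ appearing under the unified sum $\sum_{i=1}^n$. Once this telescoping is handled carefully, the final formula drops out directly.
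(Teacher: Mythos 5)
Your proof is correct. Note that the paper does not actually prove this statement—it is quoted as a preliminary from Thomas and Learned-Miller—so the relevant comparison is with the paper's own machinery in Theorems \ref{thm:cvar_bound_v2} and \ref{thm:ecdf_cvar_bound}, which the authors claim subsumes this bound as a special case. Your argument is essentially that machinery made concrete: the one-sided DKW/Massart event (your restriction $\delta\le 0.5$ is exactly what makes $\epsilon\ge\sqrt{\ln 2/(2n)}$ and hence the one-sided exponential bound valid), followed by the quantile-inversion step $F^{-1}(v)\le \hat F_n^{-1}(v+\epsilon)$ and the change of variables shifting the integration range to $[1-\alpha+\epsilon,1+\epsilon]$, mirrors the paper's decomposition of $CVaR_\alpha^{F_Y^U}$ into the boundary term $A_1$ (bounded by $\epsilon\max(b_X,b_Y)/\alpha$, your $Z_{n+1}\epsilon/\alpha$) and the shifted-level term $A_2=(\alpha-\epsilon)CVaR_{\alpha-\epsilon}(Y)$. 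Where the paper stops at the abstract expression $\frac{\epsilon}{\alpha}b+(1-\frac{\epsilon}{\alpha})C^{\hat F_X}_{\alpha-\epsilon}$, you carry out the extra step of evaluating the empirical quantile integral as a step function and applying Abel summation to recover the reweighted order-statistic form; I checked the bookkeeping ($w_0=0$, $w_n=\alpha-\epsilon$, and the merge of the $\int_1^{1+\epsilon}$ mass with the $i=n$ term) and it is right, including the degenerate case $\epsilon>\alpha$ where all coefficients vanish and the bound collapses to $b$. This explicit identification is precisely the equivalence the paper asserts in prose but never writes out.
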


\begin{theorem}\label{thm:thomas_lower_bound}
	If $X_1, \ldots, X_n$ are independent and identically distributed random variables and $\Pr(X_1 \ge a) = 1$ for some finite $a$, then for any $\delta \in (0, 0.5]$,
	\begin{equation}
		\begin{aligned}
			&\Pr\Bigg(
			\operatorname{CVaR}_\alpha(X_1)
			\ge Z_n
			- \frac{1}{\alpha} \sum_{i=0}^{n-1} (Z_{i+1} - Z_i) \\
			&\times \Big(
			\min \Big\{
			1,\, \frac{i}{n} + \sqrt{\frac{\ln(1/\delta)}{2n}}
			\Big\}
			- (1 - \alpha) \Big)^{+}
			\Bigg) \ge 1 - \delta,
		\end{aligned}
	\end{equation}
	where $Z_1, \ldots, Z_n$ are the order statistics (i.e., $X_1, \ldots, X_n$ sorted in ascending order), $Z_0 = a$, and $x^+ \triangleq \max\{0, x\}$ for all $x \in \mathbb{R}$.
\end{theorem}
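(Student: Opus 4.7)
The plan is to combine two ingredients: (i) the layer-cake representation of $\operatorname{CVaR}_\alpha(X)$ in terms of the CDF $F$, and (ii) the one-sided DKW (Massart) inequality, which with high probability gives a uniform upper bound on $F$ in terms of the empirical CDF $F_n$. Since a lower bound on $\operatorname{CVaR}_\alpha(X)$ corresponds to a lower bound on $1-F(x)$, and DKW supplies exactly that, the probabilistic content of the theorem reduces to that inequality; the remaining work is to rewrite the resulting integral, which involves $F_n$ evaluated at the order statistics, so that it matches the stated form.

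First I would establish the identity
\[
\operatorname{CVaR}_\alpha(X) = a + \frac{1}{\alpha}\int_a^\infty \min\{\alpha,\, 1-F(x)\}\,dx,
\]
by splitting at $q_\alpha^X$: on $[a, q_\alpha^X)$ the minimum equals $\alpha$ and contributes $\alpha(q_\alpha^X - a)$, while on $[q_\alpha^X,\infty)$ it equals $1-F(x)$, and combining with $q_\alpha^X$ gives $\alpha\,\operatorname{CVaR}_\alpha(X)-\alpha a$ via the standard tail-integral formula derived from \eqref{def:cvar_as_conditional_expectation}. Next I would invoke the one-sided DKW inequality $P(\sup_x (F(x) - F_n(x)) > \epsilon) \le e^{-2n\epsilon^2}$ with $\epsilon = \sqrt{\ln(1/\delta)/(2n)}$: with probability at least $1-\delta$, $1-F(x) \ge 1-F_n(x)-\epsilon$ for every $x$, and since $1-F(x)\ge 0$ also $1-F(x) \ge (1-F_n(x)-\epsilon)^+$. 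Monotonicity of $\min\{\alpha,\cdot\}$ and integration yield
\[
\operatorname{CVaR}_\alpha(X) \ge a + \frac{1}{\alpha}\int_a^\infty \min\{\alpha,\, (1-F_n(x)-\epsilon)^+\}\,dx.
\]
Because $F_n$ is piecewise constant at value $i/n$ on $[Z_i, Z_{i+1})$ (with $Z_0 = a$) and the integrand vanishes on $[Z_n,\infty)$ (since $F_n = 1$ there), the integral collapses to the finite sum $\sum_{i=0}^{n-1}(Z_{i+1}-Z_i)\,\min\{\alpha,\, (1-i/n-\epsilon)^+\}$.

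To bring this bound into the form stated in the theorem, I would verify the pointwise identity
\[
\min\{\alpha,\, (1-i/n-\epsilon)^+\} \;=\; \alpha - \bigl(\min\{1,\, i/n+\epsilon\} - (1-\alpha)\bigr)^+,
\]
by a case analysis on the three regimes $i/n+\epsilon \ge 1$, $1-\alpha \le i/n+\epsilon < 1$, and $i/n+\epsilon < 1-\alpha$. Substituting the identity, the $\alpha$ contribution telescopes to $\alpha(Z_n - Z_0) = \alpha(Z_n - a)$; dividing by $\alpha$ and adding the leading $a$ yields exactly $Z_n - \frac{1}{\alpha}\sum_{i=0}^{n-1}(Z_{i+1}-Z_i)\bigl(\min\{1, i/n+\epsilon\}-(1-\alpha)\bigr)^+$, as required. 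I expect the main obstacle to be the algebraic bookkeeping in this last identity and checking the three cases cleanly; the probabilistic core (one-sided DKW) is standard and the CVaR layer-cake representation is routine, but matching the two sign conventions used in the theorem statement takes care.
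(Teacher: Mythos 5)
Your proof is correct. Note first that the paper does not actually prove this statement: it is quoted verbatim from \cite{pmlr-v97-thomas19a} as background, and the paper's own route to it is indirect --- it constructs the dominated CDF $F_Y^L(y)=\min(F_Y(y)+\epsilon,1)$ with $Y$ instantiated as the ECDF, invokes coherence of CVaR under first-order stochastic dominance together with the quantile-integral representation $\frac{1}{\alpha}\int_{1-\alpha}^{1}F^{-1}(v)\,dv$, and then asserts (Theorems \ref{thm:cvar_bound_v2} and \ref{thm:ecdf_cvar_bound}) that the resulting bound coincides with the order-statistic form above. Your argument is a direct, self-contained derivation: you use the ``horizontal'' tail representation $\operatorname{CVaR}_\alpha(X)=a+\frac{1}{\alpha}\int_a^\infty \min\{\alpha,1-F(x)\}\,dx$ rather than the ``vertical'' quantile integral, substitute the piecewise-constant ECDF, and reduce everything to the pointwise identity $\min\{\alpha,(1-t)^+\}=\alpha-(\min\{1,t\}-(1-\alpha))^+$, whose three cases you check correctly; the telescoping of the $\alpha(Z_{i+1}-Z_i)$ terms then produces the stated form exactly. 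The probabilistic core is identical in both routes (the one-sided DKW/Massart inequality at level $\epsilon=\sqrt{\ln(1/\delta)/(2n)}$), and your approach has the advantage of making the algebraic equivalence with the reweighted-order-statistic form explicit, which the paper only claims. One small point worth adding: the hypothesis $\delta\in(0,0.5]$ is precisely what licenses the one-sided DKW bound with leading constant $1$ (i.e.\ $e^{-2n\epsilon^2}=\delta\le 1/2$), so you should say a word about why that restriction enters rather than leaving it implicit.
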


\section{Problem Formulation}
Let $X$ and $Y$ be two random variables. This paper seeks to establish upper and lower bounding functions, $f^U$ and $f^L$, which take as input the random variable $Y$ and a confidence level $\alpha$, and satisfy
\begin{equation}
	f^L(Y, \alpha)\leq CVaR_\alpha(X) \leq f^U(Y,\alpha).
\end{equation} 

\section{Theoretical CVaR Bounds}
\addtocounter{section}{1}
In this section, we establish bounds for the CVaR of the random variable $X$ by leveraging an auxiliary random variable $Y$. Two forms of distributional relationships between their respective CDFs are considered: a uniform bound and a non-uniform bound, as illustrated in Figure~\ref{fig:eps_bound} and Figure~\ref{fig:g_bound}. These results provide the theoretical basis for subsequent sections, in which the derived bounds are applied to estimate the CVaR of random variables that are either computationally intractable or prohibitively expensive to sample directly.

Theorem~\ref{thm:cvar_bound_v2} bounds $\mathrm{CVaR}_\alpha(X)$ in terms of $\mathrm{CVaR}_\alpha(Y)$ under the sole condition that the cumulative distribution functions of $X$ and $Y$ differ by at most a known uniform bound. This representation enhances the interpretability of the bound and constitutes a novel aspect of the result, made possible by framing the bounding problem in terms of distributional discrepancies. In the next section, we demonstrate that the bounds established in \cite{pmlr-v97-thomas19a} (theorems \ref{thm:thomas_upper_bound} and \ref{thm:thomas_lower_bound}) arise as a special case of Theorem \ref{thm:cvar_bound_v2}, thereby providing an interpretation for existing CVaR bounds that are otherwise difficult to interpret.
\begin{figure}
	\centering
	\includegraphics[width=0.8\linewidth]{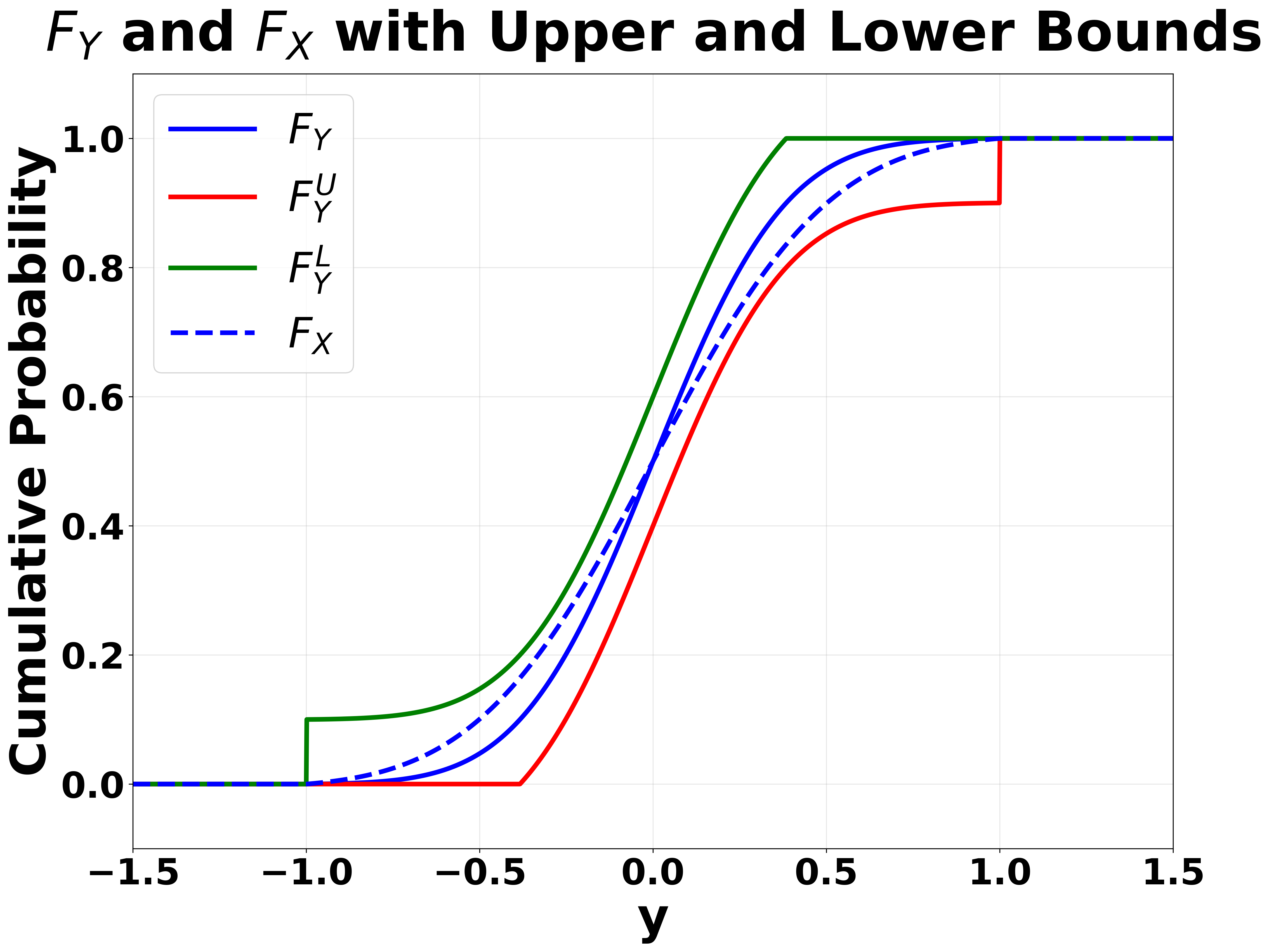}
	\caption{Illustration of the bounds on $F_X$.}
	\label{fig:CDF_bounds}
\end{figure}
%\begin{theorem}\label{thm:cvar_bound_v2}
%	Let $X\in[a_X, b_X]$ and $Y\in [a_Y,b_Y]$ be random variables for $a_X, a_Y, b_X, b_Y \in \mathbb{R}$, and denote $a=max(a_X,a_Y),b=min(b_X,b_Y)$. Let $\epsilon \in [0, 1]$. 
%	\begin{enumerate}
%		\item If $\forall z\in \mathbb{R},F_Y(z)-F_X(z)\leq \epsilon$ and $\alpha > \epsilon$ then 
%		\begin{equation}\label{eq:general_cvar_upper_bound_1}
%			CVaR_\alpha(X) \leq \frac{\epsilon}{\alpha}max(b_X,b_Y) + (1 - \frac{\epsilon}{\alpha})CVaR_{\alpha-\epsilon}(Y).
%		\end{equation}
%		
%		\item If $\forall z\in \mathbb{R},F_Y(z)-F_X(z)\leq \epsilon$ and $\alpha \leq \epsilon$ then $CVaR_\alpha(X)\leq max(b_X,b_Y)$.
%		
%		\item If $\forall z\in \mathbb{R},F_X(z)-F_Y(z)\leq \epsilon$ and $\alpha+\epsilon \leq 1$ then 
%		\begin{equation}\label{eq:general_cvar_lower_bound_1}
%			CVaR_{\alpha}(X)\geq (1+\frac{\epsilon}{\alpha})CVaR_{\alpha+\epsilon}(Y)-\frac{\epsilon}{\alpha} CVaR_{\epsilon}(Y)
%		\end{equation}
%		
%		\item If $\forall z\in \mathbb{R},F_X(z)-F_Y(z)\leq \epsilon$ and $\alpha+\epsilon > 1$ then 
%		\begin{equation}
%			CVaR_\alpha(X) \geq \mathbb{E}[Y] - \epsilon CVaR_\epsilon(Y) + (\alpha+\epsilon-1)a_{min}
%		\end{equation}
%	\end{enumerate}
%\end{theorem}
\begin{theorem}\label{thm:cvar_bound_v2}
	Let $X$ and $Y$ be random variables and $\epsilon \in [0, 1]$. 
	\begin{enumerate}
		\item \textbf{Upper bound:} assume that $P(X\leq b_X)=1, P(Y\leq b_Y)=1$ and $\forall z\in \mathbb{R},F_Y(z)-F_X(z)\leq \epsilon$, then 
		\begin{enumerate}
			\item If $\alpha > \epsilon$, \begin{equation}\label{eq:general_cvar_upper_bound_1}
				CVaR_\alpha(X) \leq \frac{\epsilon}{\alpha}max(b_X,b_Y) + (1 - \frac{\epsilon}{\alpha})CVaR_{\alpha-\epsilon}(Y).
			\end{equation}
			
			\item If $\alpha \leq \epsilon$, then $CVaR_\alpha(X)\leq max(b_X,b_Y)$.
		\end{enumerate}
		
		\item \textbf{Lower Bound:} If $P(X\geq a_X)=1, P(Y\geq a_Y)=1$ and $\forall z\in \mathbb{R},F_X(z)-F_Y(z)\leq \epsilon$, then
		\begin{enumerate}
			\item If $\alpha+\epsilon \leq 1$, \begin{equation}\label{eq:general_cvar_lower_bound_1}
				CVaR_{\alpha}(X)\geq (1+\frac{\epsilon}{\alpha})CVaR_{\alpha+\epsilon}(Y)-\frac{\epsilon}{\alpha} CVaR_{\epsilon}(Y)
			\end{equation}
			
			\item If $\alpha+\epsilon > 1$ and $a_{min} = min(a_X, a_Y)$, \begin{equation}
				CVaR_\alpha(X) \geq \mathbb{E}[Y] - \epsilon CVaR_\epsilon(Y) + (\alpha+\epsilon-1)a_{min}
			\end{equation}
		\end{enumerate}
	\end{enumerate}
\end{theorem}
\begin{proof}
	Due to space constraints, we provide only a proof sketch here; the complete proof can be found in the supplementary material. 
	
	The strategy of the proof is to construct two distributions derived from $F_Y$, denoted $F_Y^L$ and $F_Y^U$, such that $F_X$ is first-order stochastically dominated by $F_Y^U$ and first-order stochastically dominates $F_Y^L$; that is, $\forall x\in \mathbb{R},F_Y^L(x) \geq F_X \geq F_Y^U(x)$. Consequently, since CVaR is a coherent risk measure, it follows that
	\begin{equation}
		CVaR_\alpha^{F_Y^L} \leq CVaR_\alpha^{F_X} \leq CVaR_\alpha^{F_Y^U},
	\end{equation}
	where $CVaR_\alpha^{F_Y^L}$, $CVaR_\alpha^{F_X}$, and $CVaR_\alpha^{F_Y^U}$ denote the CVaR at level $\alpha$ corresponding to the distributions $F_Y^L$, $F_X$, and $F_Y^U$, respectively. 
	
	Let $a_{\min} = \min(a_X, a_Y)$ and $b_{\min} = \min(b_X, b_Y)$, and assume $P(a_X\leq X \leq b_X)=1, P(a_Y\leq Y \leq b_Y)=1$. Define the interval $[b_{\min}, b_X)$ to be $\emptyset$ if $b_{\min} = b_X$, and equal to $[b_Y, b_X)$ otherwise. Analogously, define $[a_{\min}, a_X)$ in the same manner. We then define upper and lower bounds for $F_Y$ as follows (see Figure \ref{fig:CDF_bounds}): 
	\begin{equation}
		F_Y^U(y) = 
		\begin{cases}
			0 & y < \max(a_X, q_{1-\epsilon}^Y) \\
			\min(F_Y(y) - \epsilon, 1 - \epsilon) 
			& y \in [\max(a_X, q_{1-\epsilon}^Y),\\
			& , b_{\min}) \\
			1 - \epsilon & y \in [b_{\min}, b_X) \\
			1 & y \geq \max(b_X, b_Y)
		\end{cases}
	\end{equation}
	\begin{equation}
		F_Y^L(y)=\begin{cases}
			0 & y<a_{min}  \\
			\epsilon & y\in [a_{min}, a_Y) \\
			min(F_Y(y)+\epsilon, 1) & y\in [a_Y, min(q_{\epsilon}^Y, b_X)) \\
			1 & y\geq min(q_\epsilon^Y, b_X).
		\end{cases}
	\end{equation}
	As a first step, it is necessary to verify that $F_Y^L$ and $F_Y^U$ are valid cumulative distribution functions and that they satisfy $F_Y^L \leq F_X \leq F_Y^U$. This verification is deferred to the supplementary material. Figure \ref{fig:CDF_bounds} illustrates the corresponding upper and lower bounding CDFs.
	
	\textbf{Upper bound for $CVaR_\alpha^{F_Y^U}$:} From \cite{acerbi2002coherence}, CVaR is equal to an integral of the VaR
	\begin{equation}
		\begin{aligned}
			&CVaR_\alpha^{F_Y^U}=\frac{1}{\alpha}\int_{1-\alpha}^1 \inf \{y\in \mathbb{R}:F_Y^U(y)\geq \tau\} d\tau \\
			&=\frac{1}{\alpha}\int_{1-\alpha+\epsilon}^{1+\epsilon} \inf \{y\in \mathbb{R}:F_Y^U(y)\geq \tau -\epsilon\} d\tau
		\end{aligned}
	\end{equation}  
	If $\alpha \leq \epsilon$, then $1 - \alpha + \epsilon \geq 1$, rendering the bound in the preceding equation trivial, as it attains the maximum value of the support of both $X$ and $Y$.
	\begin{equation}\label{eq:helper1}
		\begin{aligned}
			&\frac{1}{\alpha}\int_{1-\alpha+\epsilon}^{1+\epsilon} \inf \{y\in \mathbb{R}:F_Y^U(y)\geq \tau -\epsilon\} \\
			&\leq \frac{1}{\alpha}\int_{1-\alpha+\epsilon}^{1+\epsilon} max(b_X, b_Y)d\tau = max(b_X, b_Y).
		\end{aligned}
	\end{equation}
	That is, $CVaR_\alpha(X)\leq max(b_X, b_Y)$.
	
	If $\alpha > \epsilon$, then $1 - \alpha + \epsilon < 0$, and the integral may be decomposed into one term that is trivially bounded by the maximum of the support and another term that can be computed explicitly
	\begin{equation}
		\begin{aligned}
			&CVaR_\alpha^{F_Y^U}=\frac{1}{\alpha}\int_{1-\alpha+\epsilon}^{1+\epsilon} \inf \{y\in \mathbb{R}:F_Y^U(y)\geq \tau -\epsilon\} d\tau \\
			&=\frac{1}{\alpha}[\underbrace{\int_{1}^{1+\epsilon} \inf \{y\in \mathbb{R}:F_Y^U(y)\geq \tau -\epsilon\} d\tau}_{\triangleq A_1} \\
			&+\underbrace{\int_{1-\alpha+\epsilon}^{1} \inf \{y\in \mathbb{R}:F_Y^U(y)\geq \tau -\epsilon\} d\tau}_{\triangleq A_2}]
		\end{aligned}
	\end{equation}
	The term $A_1$ is bounded, in a manner analogous to \eqref{eq:helper1}, by $\epsilon \max(b_X, b_Y)$, which is essentially the tightest bound attainable given the definition of $F_Y^U$. The term $A_2$ can be expressed in terms of the CVaR of $Y$, evaluated at a shifted confidence level. Specifically, the variable $\tau - \epsilon$ in $A_2$ lies within the interval $[1 - \alpha, 1 - \epsilon]$. Over this range, for all $y \in [\max(a_X, q_{1-\epsilon}^Y), b_{\min}]$, the inequality $F_Y^U(y) \leq F_Y(y) - \epsilon$ holds. This follows because if $q_{1-\epsilon}^Y \geq a_X$, then by definition $F_Y^U(y) = min(F_Y(y) - \epsilon, 1-\epsilon)$, and if $a_X > q_{1-\epsilon}^Y$, then $F_Y^U(y)=0 \leq F_Y(y) - \epsilon$ for $y\in [q_{1-\epsilon}^Y, a_X]$ and again $F_Y^U(y) = min(F_Y(y) - \epsilon, 1-\epsilon)$ for $y\in [a_X, b_{min}]$.
	\begin{equation}
		\begin{aligned}
			&A_2 \leq \int_{1-\alpha+\epsilon}^1 \inf \{y\in \mathbb{R}:F_Y(y) - \epsilon \geq \tau -\epsilon\}d\tau \\
			&=\int_{1-(\alpha-\epsilon)}^1 \inf \{y\in \mathbb{R}:F_Y(y) \geq \tau\}d\tau \\
			&=(\alpha-\epsilon)CVaR_{\alpha-\epsilon}(Y).
		\end{aligned}
	\end{equation}
	Note that the confidence level is valid, as $\alpha, \epsilon \in (0,1)$ and $\alpha > \epsilon$ imply $\alpha-\epsilon \in (0, 1)$. By combining the previous two expressions, we obtain a single bound:
	\begin{equation}
		CVaR_\alpha^{F_Y^U} \leq \epsilon\frac{max(b_X,b_Y)}{\alpha} + (1 - \frac{\epsilon}{\alpha})CVaR_{\epsilon-\alpha}(Y).
	\end{equation}
	In the case where $a_X = -\infty$, we have $\max(a_X, q_{1-\epsilon}^Y) = q_{1-\epsilon}^Y$, and the definition of $F_Y^U$ no longer involves $a_X$. Therefore, the inequality remains valid in the case of $a_X = -\infty$.
		
	\textbf{Lower bound for $CVaR_\alpha^{F_Y^L}$:} The detailed proof is provided in the supplementary material. The proof follows an approach analogous to that of the upper bound: specifically, the integral defining $CVaR_\alpha^{F_Y^L}$ is decomposed into terms that exhibit the structure of a CVaR expression.
\end{proof}

The parameter $\epsilon$ quantifies the discrepancy between the random variables $X$ and $Y$, and is defined as a uniform bound on the difference between their cumulative distribution functions. By construction, $\epsilon$ takes values in the interval $[0,1]$. In the limiting case where $\epsilon = 1$, $Y$ provides no information about $X$. In this setting, the bounds provided by Theorem~\ref{thm:cvar_bound_v2} reduce to trivial bounds on $\mathrm{CVaR}_\alpha(X)$ involving the essential supremum of both $X$ and $Y$, rendering them ineffective for practical use. When $\epsilon = 0$, the cumulative distribution functions of $X$ and $Y$ are identical, and thus the bound becomes exact, yielding $\mathrm{CVaR}_\alpha(X) = \mathrm{CVaR}_\alpha(Y)$. 
\begin{theorem}\label{thm:cvar_bound_convergence}
	Under the definitions of $X$, $Y$, $\epsilon$, and $\alpha$ specified in Theorem~\ref{thm:cvar_bound_v2}, the lower and upper bounds established therein converge to $\operatorname{CVaR}_\alpha(X)$ as $\epsilon \to 0$.
\end{theorem}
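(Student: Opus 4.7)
The plan is to show that each explicit bound expression in Theorem~\ref{thm:cvar_bound_v2} converges to $\operatorname{CVaR}_\alpha(X)$ as $\epsilon \to 0$. Since $\alpha \in (0,1)$ is fixed, for all sufficiently small $\epsilon$ we are in the interior cases (1a) and (2a) of that theorem, because then $\alpha > \epsilon$ and $\alpha + \epsilon < 1$. Moreover, combining the hypotheses of both the upper and lower bound statements yields the joint condition $\sup_z |F_X(z) - F_Y(z)| \le \epsilon$, so that $F_Y \to F_X$ uniformly as $\epsilon \to 0$.

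For the upper bound in case (1a), I would decompose \eqref{eq:general_cvar_upper_bound_1} into the penalty term $\frac{\epsilon}{\alpha}\max(b_X, b_Y)$ and the main term $\bigl(1 - \tfrac{\epsilon}{\alpha}\bigr)\operatorname{CVaR}_{\alpha-\epsilon}(Y)$. The penalty tends to $0$ by boundedness of supports. For the main term, I would use the representation $\operatorname{CVaR}_{\alpha-\epsilon}(Y) = \frac{1}{\alpha-\epsilon}\int_{1-(\alpha-\epsilon)}^{1} F_Y^{-1}(v)\,dv$ from \eqref{def:cvar_as_conditional_expectation}, together with the facts that uniform convergence of CDFs on a bounded interval forces pointwise a.e.\ convergence of the associated quantile functions at continuity points, and the quantile functions are uniformly bounded by $\max(|a_X|,|b_X|,|a_Y|,|b_Y|)$. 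Dominated convergence then delivers the limit $\operatorname{CVaR}_\alpha(X)$.

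For the lower bound in case (2a), an analogous argument applies to \eqref{eq:general_cvar_lower_bound_1}. The term $\bigl(1 + \tfrac{\epsilon}{\alpha}\bigr)\operatorname{CVaR}_{\alpha+\epsilon}(Y)$ converges to $\operatorname{CVaR}_\alpha(X)$ by the same joint continuity argument, now with shifted level $\beta_\epsilon = \alpha + \epsilon \to \alpha$. The corrective term $\frac{\epsilon}{\alpha}\operatorname{CVaR}_\epsilon(Y)$ vanishes because $\operatorname{CVaR}_\epsilon(Y)$ remains bounded by $\max(|a_Y|, |b_Y|)$ as $\epsilon \to 0$ (indeed it approaches $\operatorname{ess\,sup}(Y)$), while its prefactor $\epsilon/\alpha \to 0$.

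The main obstacle I expect is cleanly justifying the joint continuity of CVaR in both the confidence level and the underlying distribution: one must establish that whenever $\|F_Y - F_X\|_\infty \to 0$ and $\beta \to \alpha$, then $\operatorname{CVaR}_\beta(Y) \to \operatorname{CVaR}_\alpha(X)$. Under the smoothness assumption associated with \eqref{def:cvar_as_conditional_expectation} and the boundedness of supports, the inverse-CDF integral representation combined with dominated convergence handles this cleanly, but care is needed to control the quantile functions at points where $F_X$ could be flat, so that the a.e.\ convergence of $F_Y^{-1}$ to $F_X^{-1}$ is not disrupted in the tail integration range $[1-\alpha, 1]$.
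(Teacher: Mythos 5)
Your proposal is correct, but it takes a genuinely different and more demanding route than the paper. The paper's proof holds $Y$ fixed, invokes only continuity of $\operatorname{CVaR}_\beta(Y)$ in the confidence level $\beta$ together with elementary product-of-limits rules, and concludes that the upper bound tends to $\frac{\epsilon}{\alpha}\max(b_X,b_Y)+(1-\frac{\epsilon}{\alpha})\operatorname{CVaR}_{\alpha-\epsilon}(Y)\to \operatorname{CVaR}_\alpha(Y)$ (and analogously for the lower bound, where $\frac{\epsilon}{\alpha}\operatorname{CVaR}_\epsilon(Y)\to 0$ exactly as you argue); the identification of this limit with $\operatorname{CVaR}_\alpha(X)$ is left implicit, resting on the observation that the discrepancy hypotheses at $\epsilon=0$ force $F_Y=F_X$. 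You instead prove convergence directly to $\operatorname{CVaR}_\alpha(X)$ by combining the two one-sided hypotheses into $\|F_X-F_Y\|_\infty\le\epsilon$, passing to the quantile representation of \eqref{def:cvar_as_conditional_expectation}, and using a.e.\ convergence of quantile functions plus dominated convergence on the bounded supports. Your version is heavier machinery but buys something real: it remains valid when $Y=Y_\epsilon$ is a family varying with $\epsilon$ (the setting in which the statement is actually interesting), and it makes explicit that the two-sided condition is what licenses the conclusion $\operatorname{CVaR}_\alpha(X)$ rather than merely $\operatorname{CVaR}_\alpha(Y)$ --- a point the paper's shorter argument glosses over. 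The ``main obstacle'' you flag, joint continuity of CVaR in the level and the distribution, is indeed the crux, and your resolution via the inverse-CDF integral with uniformly bounded quantiles is sound; the countability of the discontinuity set of $F_X^{-1}$ takes care of the flat-region worry you raise.
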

In the non-extreme cases, it holds that $\alpha>\epsilon$ and $\alpha+\epsilon\leq 1$, reducing the bounding problem to \eqref{eq:general_cvar_upper_bound_1} and \eqref{eq:general_cvar_lower_bound_1}. As we will see in the next section, $\epsilon$ could be very close to zero in practice. For $\alpha > \epsilon$, the lower bound on $\mathrm{CVaR}_\alpha(X)$ is a weighted average of the $\mathrm{CVaR}$ of $Y$ at a confidence level shifted by the distributional discrepancy $\epsilon$ and the maximum of the supports of $X$ and $Y$, where the weights are proportional to the amount of distributional discrepancy. For $\alpha+\epsilon \leq 1$, by restructuring \eqref{eq:general_cvar_lower_bound_1} as follows, 
\begin{equation}
	\begin{aligned}
		CVaR_{\alpha}(X) &\geq CVaR_{\alpha+\epsilon}(Y) \\
		&+ \frac{\epsilon}{\alpha}(CVaR_{\alpha+\epsilon}(Y)- CVaR_{\epsilon}(Y)).
	\end{aligned}
\end{equation}
The lower bound corresponds to the $\operatorname{CVaR}$ of $Y$ evaluated at a confidence level adjusted by the distributional discrepancy $\epsilon$, augmented by a correction term that is proportional to the distributional discrepancy.

Theorem~\ref{thm:cvar_bound_v2} assumes that the parameter $\epsilon$ provides an upper bound on the pointwise difference between the cumulative distribution functions $F_X$ and $F_Y$. As illustrated in Figure~\ref{fig:eps_bound}, this bound is particularly conservative in the vicinity of $x = -1$, where the actual discrepancy between $F_X$ and $F_Y$ is significantly smaller than the global bound $\epsilon$. Ideally, a tighter bound on $F_X(x)$ would allow for variation with respect to $x$, rather than relying on a uniform constant. Specifically, one seeks a pointwise bound of the form $F_X(x) \leq F_Y(x) + g(x)$ for some non-negative function $g$, as illustrated in Figure~\ref{fig:g_bound}.

\begin{figure*} % Use figure* to span both columns
	\centering
	\subfloat[]{\includegraphics[width=0.4\textwidth]{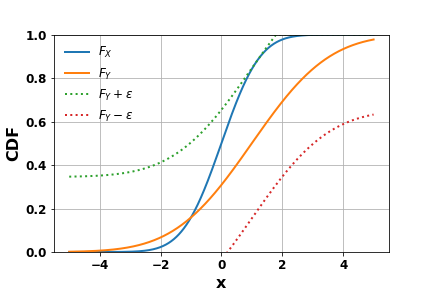}\label{fig:eps_bound}}	\subfloat[]{\includegraphics[width=0.4\textwidth]{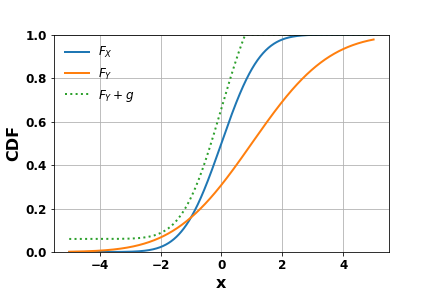}\label{fig:g_bound}}
	\caption{(a) and (b) are illustrations of the bounds on $F_X(x)$ from Theorems \ref{thm:cvar_bound_v2} and \ref{thm:tight_cvar_lower_bound}. Since $g$ from Theorem \ref{thm:tight_cvar_lower_bound} depends on $x$, the bound $F_Y(x)+g(x)$ in (b) is tighter than $F_Y(x)+\epsilon$ in (a).}
\end{figure*}

\begin{theorem}\label{thm:tight_cvar_lower_bound}
	(Tighter CVaR Lower Bound) Let $\alpha \in (0,1)$, $X$ and $Y$ be random variables. Define the a random variable $Y^L$ such that $F_{Y^L}(y) \triangleq min(1, F_Y(y) + g(y))$ for $g:\mathbb{R}\rightarrow [0, \infty)$. Assume 
	$lim_{x \rightarrow -\infty}g(x)=0$, $g$ is continuous from the right and monotonic increasing.
	If $\forall x\in \mathbb{R}, F_X(x)\leq F_Y(x)+g(x)$, then $F_{Y^L}$ is a CDF and $CVaR_\alpha(Y^L)\leq CVaR_\alpha(X).$
\end{theorem}

Theorem \ref{thm:tight_cvar_lower_bound} defines a random variable $Y^L$, constructed from $Y$ and the function $g$, such that the distributional discrepancy between $Y^L$ and $X$ is determined explicitly by $g$ rather than being uniformly bounded as in Theorem \ref{thm:cvar_bound_v2}. In addition, it offers a criterion for determining whether a given function $g$ can be used to derive a lower bound on the CVaR. This bound extends the lower bound established in Theorem \ref{thm:cvar_bound_v2}, which is obtained when $g(x)$ is constant and equal to $\epsilon$ for all $x \in \mathbb{R}$. 

Note that in Theorem \ref{thm:tight_cvar_lower_bound}, the function $g$ is assumed to be non-decreasing and right-continuous. If one assumes only that $|F_X(x) - F_Y(x)| \leq g(x)$ for some function $g : \mathbb{R} \to [0, \infty)$, which is not necessarily monotonic or continuous, then the most general form of the CVaR bounds is given by
\begin{equation}\label{eq:general_g_cvar_lower_bound}
	\begin{aligned}
		&CVaR_\alpha(X)=\frac{1}{\alpha}\int_{1-\alpha}^1 \inf\{z\in \mathbb{R}:F_X(z)\geq \tau\}d\tau \\
		&\geq \frac{1}{\alpha}\int_{1-\alpha}^1 \inf\{z\in \mathbb{R}:F_Y(z)+g(z)\geq \tau\}d\tau,
	\end{aligned}
\end{equation}
\begin{equation}\label{eq:general_g_cvar_upper_bound}
	\begin{aligned}
		&CVaR_\alpha(X)=\frac{1}{\alpha}\int_{1-\alpha}^1 \inf\{z\in \mathbb{R}:F_X(z)\geq \tau\}d\tau \\
		&\leq \frac{1}{\alpha}\int_{1-\alpha}^1 \inf\{z\in \mathbb{R}:F_Y(z)-g(z)\geq \tau\}d\tau.
	\end{aligned}
\end{equation}

Another option is to specify the distributional discrepancy through the density functions underlying the cumulative distribution functions. Let $f_x$ and $f_y$ be the probability density functions of $X$ and $Y$, respectively, and let $h : \mathbb{R} \to [0, \infty)$ describe the pointwise discrepancy between them. Theorem \ref{thm:lower_cvar_bound_using_density_bound} specifies conditions on the function $h$ under which a lower bound for the CVaR of $X$ can be obtained. A key advantage is that this bound takes the form of the CVaR of a random variable, enabling its estimation with performance guarantees via CVaR concentration bounds given in  theorems \ref{thm:brown_bounds}, \ref{thm:thomas_lower_bound}, \ref{thm:thomas_upper_bound} and Theorem \ref{thm:ecdf_cvar_bound}.

\begin{theorem}\label{thm:lower_cvar_bound_using_density_bound}
	Let $\alpha \in (0,1)$, $X$ and $Y$ random variables. Define $h:\mathbb{R}\rightarrow [0,\infty)$ to be a continuous function, $g(z)\triangleq\int_{-\infty}^z h(x)dx$ and $Y^L$ to be a random variable such that $F_{Y^L}(y)\triangleq\min(1, F_{Y}(y) + g(y))$. If $\lim_{z\rightarrow -\infty} g(z)=0$ and $\forall x\in \mathbb{R},f_x(x)\leq f_y(z) + h(x)$, then $F_{Y^L}$ is a CDF and $CVaR_\alpha(Y^L)\leq CVaR_\alpha(X)$.
\end{theorem}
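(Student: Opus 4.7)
The plan is to reduce the statement to Theorem \ref{thm:tight_cvar_lower_bound} by verifying that the antiderivative $g(z) = \int_{-\infty}^z h(x)\,dx$ satisfies the regularity hypotheses required there, and that the density-level inequality $f_x(x) \leq f_y(x) + h(x)$ integrates into the CDF-level inequality $F_X(x) \leq F_Y(x) + g(x)$.

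First I would verify the hypotheses of Theorem \ref{thm:tight_cvar_lower_bound}. Since $h$ is continuous and non-negative, the fundamental theorem of calculus shows that $g$ is differentiable with $g'(z) = h(z) \geq 0$; hence $g$ is continuous (in particular right-continuous) and monotonically non-decreasing. The assumption $\lim_{z \to -\infty} g(z) = 0$ is given directly. A brief check using this limit together with local integrability of the continuous function $h$ on bounded intervals also shows that $g$ is finite at every $z \in \mathbb{R}$, so indeed $g : \mathbb{R} \to [0, \infty)$ as required by Theorem \ref{thm:tight_cvar_lower_bound}.

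Next I would establish the pointwise CDF bound. Integrating the density inequality from $-\infty$ up to $x$ yields
\begin{equation}
F_X(x) = \int_{-\infty}^x f_x(t)\,dt \leq \int_{-\infty}^x f_y(t)\,dt + \int_{-\infty}^x h(t)\,dt = F_Y(x) + g(x),
\end{equation}
valid for every $x \in \mathbb{R}$ (the interchange of integral and inequality is justified because all three integrands are non-negative and measurable). At this point every hypothesis of Theorem \ref{thm:tight_cvar_lower_bound} is in place, so invoking it concludes both that $F_{Y^L}(y) = \min(1, F_Y(y) + g(y))$ is a valid CDF and that $\operatorname{CVaR}_\alpha(Y^L) \leq \operatorname{CVaR}_\alpha(X)$.

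The argument is essentially a mechanical reduction, so I do not anticipate a substantive conceptual obstacle. The only detail that warrants mild care is confirming that $g$ stays finite on all of $\mathbb{R}$ under the stated assumptions, since Theorem \ref{thm:tight_cvar_lower_bound} requires $g$ to take values in $[0, \infty)$ rather than $[0, \infty]$; this is handled by combining the left-tail limit hypothesis with the continuity of $h$, as described above. Once that is observed, the remainder is a direct invocation of the previously established tighter lower bound.
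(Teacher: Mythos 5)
Your proposal is correct and follows essentially the same route as the paper's proof: both reduce the statement to Theorem \ref{thm:tight_cvar_lower_bound} by using the fundamental theorem of calculus (with continuity of $h$) to establish right-continuity and monotonicity of $g$, and by integrating the density inequality to obtain $F_X \leq F_Y + g$. Your extra remark on the finiteness of $g$ is a small but welcome point of care that the paper leaves implicit.
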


Theorem \ref{thm:lower_cvar_bound_using_density_bound} is obtained as a corollary of Theorem \ref{thm:tight_cvar_lower_bound} by defining a function $g$, as illustrated in Figure \ref{fig:g_bound}, in terms of the function $h$. This construction demonstrates that the function $g$ in Theorem \ref{thm:tight_cvar_lower_bound} can be generated from a broad class of density discrepancy functions.

\section{Concentration Inequalities}\label{sec:con_ineq}
\addtocounter{section}{1}
In this section, we derive concentration inequalities for $\operatorname{CVaR}_\alpha(X)$ based on samples drawn from an auxiliary random variable $Y$. A notable special case of these inequalities arises when $Y$ is taken to follow the empirical cumulative distribution function (ECDF) of $X$. In such cases, concentration inequalities for $\operatorname{CVaR}_\alpha(X)$ are obtained.

Let $X_1, \ldots, X_n \overset{i.i.d.}{\sim} F_X$, where $F_X$ is the CDF of a random variable $X$. The ECDF based on these samples is defined by
\begin{equation}
	\hat{F}_X(x)=\frac{1}{n} \sum_{i=1}^n 1_{X_i\leq x},
\end{equation}
for $x\in \mathbb{R}$. Let $E_{\hat{F}_X}[X]$ denote the expectation with respect to $\hat{F}_X$, and let $C_\alpha^{\hat{F}_X}$ denote the CVaR computed under the empirical distribution $\hat{F}_X$.

%\begin{theorem}
%	\label{thm:ecdf_cvar_bound}
%	\KP{[Thomas assumes only one side bound.]}Let $X\in [a,b]$ be some random variable for $a,b\in \mathbb{R}$, and $\alpha\in (0, 1],\delta \in (0, 1), \epsilon=\sqrt{\ln(1/\delta)/(2n)}$. Let $X_1, \dots, X_n \overset{iid}{\sim} F_X$ be random variables that define the ECDF $\hat{F}_X$. 
%	\begin{enumerate}
%		\item If $\alpha > \epsilon$, then $P(CVaR_\alpha(X) \leq (1-\frac{\epsilon}{\alpha})C_{\alpha-\epsilon}^{\hat{F}_X} + \frac{\epsilon}{\alpha}b)> 1-\delta$.
%		
%		\item If $\alpha\leq \epsilon$ then $CVaR_\alpha(X)\leq b$
%		
%		\item If $\alpha+\epsilon < 1$, then $P(CVaR_\alpha(X)\geq (1+\frac{\epsilon}{\alpha})C_{\alpha+\epsilon}^{\hat{F}_X}-\frac{\epsilon}{\alpha}C_{\epsilon}^{\hat{F}_X})> 1-\delta$.
%		
%		\item If $\alpha+\epsilon \geq 1$, then $P(CVaR_\alpha(X) \geq \frac{1}{\alpha}[(\alpha+\epsilon - 1)a + E_{\hat{F}_X}[X] - \epsilon C_\epsilon^{\hat{F}_X}]) > 1-\delta$.
%	\end{enumerate}
%\end{theorem}
\begin{theorem}
	\label{thm:ecdf_cvar_bound}
	Let $X$ a random variable, $\alpha\in (0, 1],\delta \in (0, 1), \epsilon=\sqrt{\ln(1/\delta)/(2n)}$. Let $X_1, \dots, X_n \overset{iid}{\sim} F_X$ be random variables that define the ECDF $\hat{F}_X$. 
	\begin{enumerate}
		\item \textbf{Upper Bound:} If $P(X\leq b)=1$, then 
			\begin{enumerate}
				\item If $\alpha > \epsilon$ then $P(CVaR_\alpha(X) \leq (1-\frac{\epsilon}{\alpha})C_{\alpha-\epsilon}^{\hat{F}_X} + \frac{\epsilon}{\alpha}b)> 1-\delta$.
				
				\item If $\alpha\leq \epsilon$ then $CVaR_\alpha(X)\leq b$
			\end{enumerate}
			
		\item \textbf{Lower Bound:} If $P(X\geq a)=1$, then 
			\begin{enumerate}
				\item If $\alpha+\epsilon < 1$, then $P(CVaR_\alpha(X)\geq (1+\frac{\epsilon}{\alpha})C_{\alpha+\epsilon}^{\hat{F}_X}-\frac{\epsilon}{\alpha}C_{\epsilon}^{\hat{F}_X})> 1-\delta$.
				
				\item If $\alpha+\epsilon \geq 1$, then $P(CVaR_\alpha(X) \geq \frac{1}{\alpha}[(\alpha+\epsilon - 1)a + E_{\hat{F}_X}[X] - \epsilon C_\epsilon^{\hat{F}_X}]) > 1-\delta$.
			\end{enumerate}
	\end{enumerate}
\end{theorem}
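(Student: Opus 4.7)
The plan is to reduce Theorem~\ref{thm:ecdf_cvar_bound} to Theorem~\ref{thm:cvar_bound_v2} by choosing the auxiliary random variable $Y$ to be distributed according to the empirical distribution $\hat{F}_X$, and by controlling the uniform distributional discrepancy $\epsilon$ via the Dvoretzky--Kiefer--Wolfowitz (DKW) inequality with Massart's tight constant. Massart's one-sided form states that for any $t > 0$,
\begin{equation*}
P\Big(\sup_{z \in \mathbb{R}}(\hat{F}_X(z) - F_X(z)) > t\Big) \leq e^{-2n t^2},
\end{equation*}
with a symmetric statement for $\sup_{z}(F_X(z) - \hat{F}_X(z))$. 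Setting $t = \epsilon = \sqrt{\ln(1/\delta)/(2n)}$ makes each right-hand side exactly $\delta$, so each uniform inequality holds with probability at least $1 - \delta$.

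Concretely, I would first introduce a random variable $Y$ whose CDF, conditional on the sample $X_1, \dots, X_n$, equals $\hat{F}_X$. Under the boundedness hypotheses of each part, the empirical support lies almost surely in $[a, b]$, so $a_Y = a$ and $b_Y = b$; hence $\max(b_X, b_Y) = b$ and $\min(a_X, a_Y) = a$. By construction, $CVaR_\alpha(Y) = C_\alpha^{\hat{F}_X}$ for every $\alpha$, and $\mathbb{E}[Y] = \mathbb{E}_{\hat{F}_X}[X]$.

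For the upper bound, I would condition on the DKW event $\{\forall z,\ \hat{F}_X(z) - F_X(z) \leq \epsilon\}$, which occurs with probability $\geq 1 - \delta$ and exactly realizes the hypothesis $F_Y(z) - F_X(z) \leq \epsilon$ of Theorem~\ref{thm:cvar_bound_v2}(1). Applying that theorem in the two regimes $\alpha > \epsilon$ and $\alpha \leq \epsilon$, after substituting $\max(b_X, b_Y) = b$ and $CVaR_{\alpha-\epsilon}(Y) = C_{\alpha-\epsilon}^{\hat{F}_X}$, yields the two displayed upper bound statements. The lower bound follows symmetrically by conditioning on $\{\forall z,\ F_X(z) - \hat{F}_X(z) \leq \epsilon\}$ and invoking Theorem~\ref{thm:cvar_bound_v2}(2), with the identifications $a_{\min} = a$, $\mathbb{E}[Y] = \mathbb{E}_{\hat{F}_X}[X]$, $CVaR_{\alpha+\epsilon}(Y) = C_{\alpha+\epsilon}^{\hat{F}_X}$, and $CVaR_\epsilon(Y) = C_\epsilon^{\hat{F}_X}$.

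The reduction is largely mechanical once DKW is in hand, so the challenges are bookkeeping rather than conceptual. The two main care points are: (i) invoking the one-sided Massart form rather than the two-sided DKW inequality, so that the probability in each displayed statement is $1 - \delta$ and not $1 - 2\delta$ (the upper and lower bounds are two separate events and no union bound is taken); and (ii) tracking the $1/\alpha$ factor through the integral representation of CVaR used in the proof of Theorem~\ref{thm:cvar_bound_v2}(2b), so that after the substitutions above the right-hand side matches the form $\frac{1}{\alpha}[(\alpha+\epsilon-1)a + \mathbb{E}_{\hat{F}_X}[X] - \epsilon\, C_\epsilon^{\hat{F}_X}]$ stated in part 2(b). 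Measurability of the DKW event and of $Y$ as a sample-dependent random variable is standard and requires no special handling.
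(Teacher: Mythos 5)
Your proposal is correct and follows essentially the same route as the paper: instantiate $Y$ as the empirical distribution $\hat{F}_X$, invoke the one-sided DKW/Massart inequality with $t=\epsilon=\sqrt{\ln(1/\delta)/(2n)}$ so that each one-sided event holds with probability at least $1-\delta$, and then condition on that event so that Theorem~\ref{thm:cvar_bound_v2} applies deterministically with $\max(b_X,b_Y)=b$, $\min(a_X,a_Y)=a$, $CVaR_\beta(Y)=C_\beta^{\hat{F}_X}$, and $\mathbb{E}[Y]=E_{\hat{F}_X}[X]$. The paper writes this conditioning step out explicitly via the law of total probability, but the substance is identical, including your observation that no union bound is needed because the upper and lower statements are separate events.
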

Theorem \ref{thm:ecdf_cvar_bound} provides concentration inequalities for $\operatorname{CVaR}_\alpha(X)$ in a form that enables the user to specify a desired bound consistency level $\delta$, which determines the probability that the bound holds. This result follows as a corollary of Theorem \ref{thm:cvar_bound_v2}, in which the auxiliary random variable $Y$ is instantiated as the ECDF of $X$, whereas $X$ denotes the underlying true random variable, which is inaccessible in practice. The distributional discrepancy required by Theorem \ref{thm:cvar_bound_v2}, denoted by $\epsilon$, is controlled in Theorem \ref{thm:ecdf_cvar_bound} via the Dvoretzky–Kiefer–Wolfowitz (DKW) inequality \cite{DKW_inequality}. The DKW inequality ensures that the supremum distance between the true CDF and the ECDF converges to zero at a rate of order $1/\sqrt{n}$ as the number of samples $n$ increases. Corollary \ref{thm:x_concentration_bound_asymptotic_convergence} establishes the asymptotic convergence of the bounds given in Theorem \ref{thm:ecdf_cvar_bound} as the sample size tends to infinity.
%\begin{corollary}\label{thm:x_concentration_bound_asymptotic_convergence}
%	\KP{[adjust proof]}\KP{[Thomas assumes only one side bound.]}Let $X\in [a,b]$ be some random variable for $a,b\in \mathbb{R}$, and $\alpha\in (0, 1],\delta \in (0, 1), \epsilon=\sqrt{\ln(1/\delta)/(2n)}$. Let $X_1, \dots, X_n \overset{iid}{\sim} F_X$ be random variables that define the ECDF $\hat{F}_X$. Denote by $U(n)$ and $L(n)$ the upper and lower bounds respectively from Theorem \ref{thm:ecdf_cvar_bound}, where $n$ is the number of samples, then 
%	\begin{equation}
%		\lim_{n\rightarrow \infty} L(n)=CVaR_\alpha(X), \lim_{n\rightarrow \infty} U(n)=CVaR_\alpha(X)
%	\end{equation}
%\end{corollary}
\begin{corollary}\label{thm:x_concentration_bound_asymptotic_convergence}
	Let $X$ be a random variable, $\alpha\in (0, 1],\delta \in (0, 1), \epsilon=\sqrt{\ln(1/\delta)/(2n)}, a\in \mathbb{R}, b\in \mathbb{R}, \eta>0$. Let $X_1, \dots, X_n \overset{iid}{\sim} F_X$ be random variables that define the ECDF $\hat{F}_X$. Denote by $U(n)$ and $L(n)$ the upper and lower bounds respectively from Theorem \ref{thm:ecdf_cvar_bound}, where $n$ is the number of samples, then 
	\begin{enumerate}
		\item If $P(X\leq b)=1$, then $\lim_{n\rightarrow \infty} U(n)\overset{a.s}{=}CVaR_\alpha(X)$.
		\item If $P(X\geq a)=1$, then $\lim_{n\rightarrow \infty} L(n)\overset{a.s}{=}CVaR_\alpha(X)$.
	\end{enumerate}
	where a.s denotes almost sure convergence.
\end{corollary}

The concentration bounds established by \cite{pmlr-v97-thomas19a} coincide with those given in Theorem \ref{thm:ecdf_cvar_bound}, rendering the results of \cite{pmlr-v97-thomas19a} a special case of Theorem \ref{thm:ecdf_cvar_bound}. This equivalence arises because both Theorem \ref{thm:ecdf_cvar_bound} and \cite{pmlr-v97-thomas19a} derive concentration bounds for $\operatorname{CVaR}_\alpha(X)$ by constructing an alternative CDF that stochastically dominates the true distribution $F_X$, employing the DKW inequality to control the discrepancy. The principal distinction between the two results lies in the formulation of the $\operatorname{CVaR}$ bound: \cite{pmlr-v97-thomas19a} express the bound through a sum of reweighted order statistics (theorems \ref{thm:thomas_upper_bound} and \ref{thm:thomas_lower_bound}), resulting in a more intricate form, whereas Theorem \ref{thm:ecdf_cvar_bound} presents a more interpretable bound in terms of $\operatorname{CVaR}$. The interpretability of these bounds constitutes a contribution of this paper.

Theorem \ref{thm:x_concentration_bound_asymptotic_convergence} provides concentration inequalities for the theoretical $CVaR_\alpha(X)$ based on samples drawn from an auxiliary random variable $Y$, assuming only a bound on the distributional discrepancy between $X$ and $Y$.
\begin{theorem}
	\label{thm:ecdf_y_bound_cvar_x}
	Let $X$ and $Y$ be random variables, $\epsilon \in [0, 1]$, and $\eta = \sqrt{\ln(1/\delta)/(2n)}, \epsilon'=min(\epsilon+\eta, 1)$. Let $Y_1, \dots, Y_n$ be independent and identically distributed samples from $F_Y$, and denote by $\hat{F}_Y$ the associated empirical cumulative distribution function.
	\begin{enumerate}
		\item \textbf{Upper Bound:} If $\forall z\in \mathbb{R},F_Y(z)-F_X(z)\leq \epsilon$ and $P(X\leq b_X)=1, P(Y\leq b_Y)=1$, then
		
		\begin{enumerate}
			\item If $\alpha > \epsilon'$ then \begin{equation}\label{eq:general_cvar_upper_bound_1}
				\begin{aligned}
					P\Big(CVaR_\alpha(X) &\leq \frac{\epsilon'}{\alpha}max(b_X,b_Y) \\
					&+ (1 - \frac{\epsilon'}{\alpha})CVaR_{\alpha-\epsilon'}^{\hat{F}_Y^L}\Big)>1-\delta.
				\end{aligned}
			\end{equation}
			
			\item If $\alpha \leq \epsilon'$, then $CVaR_\alpha(X)\leq max(b_X,b_Y)$
		\end{enumerate}
		
		\item \textbf{Lower Bound:} If $\forall z\in \mathbb{R},F_X(z)-F_Y(z)\leq \epsilon$ and $P(X\geq a_X)=1,P(Y\geq a_Y)=1$, then \begin{enumerate}
			\item If $\alpha+\epsilon' \leq 1$, then \begin{equation}\label{eq:general_cvar_lower_bound_1}
				\begin{aligned}
					P\Big(CVaR_{\alpha}(X)&\geq (1+\frac{\epsilon'}{\alpha})CVaR_{\alpha+\epsilon'}^{\hat{F}_Y^L} \\
					&-\frac{\epsilon'}{\alpha} CVaR_{\epsilon'}^{\hat{F}_Y^L}\Big)>1-\delta.
				\end{aligned}
			\end{equation}
			
			\item If $\alpha+\epsilon' > 1$, then \begin{equation}
				\begin{aligned}
					P\Big(CVaR_\alpha(X) &\geq \mathbb{E}_{\hat{F}_Y^L}[Y] - \epsilon' CVaR_{\epsilon'}^{\hat{F}_Y} \\
					&+ (\alpha+\epsilon'-1)a_{min} \Big) > 1-\delta.
				\end{aligned}
			\end{equation}
		\end{enumerate}
	\end{enumerate}
\end{theorem}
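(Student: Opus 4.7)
The plan is to obtain Theorem~\ref{thm:ecdf_y_bound_cvar_x} as a direct corollary of Theorem~\ref{thm:cvar_bound_v2}, by using the empirical distribution $\hat{F}_Y$ in place of $F_Y$ as the auxiliary and absorbing the resulting sampling error into the discrepancy parameter. The structure mirrors the derivation of Theorem~\ref{thm:ecdf_cvar_bound} from Theorem~\ref{thm:cvar_bound_v2}, the only new twist being that here the pointwise CDF gap between the inaccessible $F_X$ and the observable $\hat{F}_Y$ decomposes into a modelling gap of size at most $\epsilon$ (between $F_X$ and $F_Y$, by assumption) plus a statistical gap of size at most $\eta$ (between $F_Y$ and $\hat{F}_Y$, via DKW), which is precisely the definition of $\epsilon'$ up to the cap at $1$.

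First I would invoke the Dvoretzky--Kiefer--Wolfowitz inequality to obtain, for $\eta = \sqrt{\ln(1/\delta)/(2n)}$, the event $\mathcal{E} = \{\sup_{z\in\mathbb{R}} |\hat{F}_Y(z) - F_Y(z)| \leq \eta\}$ with $\Pr(\mathcal{E}) \geq 1-\delta$. Restricting to $\mathcal{E}$ and using the triangle inequality, the upper-bound assumption $F_Y(z) - F_X(z) \leq \epsilon$ yields $\hat{F}_Y(z) - F_X(z) \leq \epsilon + \eta$ for every $z$, and analogously the lower-bound assumption gives $F_X(z) - \hat{F}_Y(z) \leq \epsilon + \eta$. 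Since any CDF discrepancy is trivially at most $1$, clipping to $\epsilon' = \min(\epsilon + \eta, 1)$ preserves validity, so on $\mathcal{E}$ the pair $(F_X, \hat{F}_Y)$ satisfies the hypothesis of Theorem~\ref{thm:cvar_bound_v2} with discrepancy $\epsilon'$ in the appropriate direction.

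Next I would apply Theorem~\ref{thm:cvar_bound_v2} conditionally on $\mathcal{E}$, with the auxiliary random variable taken to be distributed according to $\hat{F}_Y$ and with $\epsilon$ replaced by $\epsilon'$. The four sub-cases of Theorem~\ref{thm:ecdf_y_bound_cvar_x} then follow by direct substitution into the four sub-cases of Theorem~\ref{thm:cvar_bound_v2}, yielding deterministic inequalities on $\mathcal{E}$; intersecting with $\Pr(\mathcal{E}) \geq 1-\delta$ promotes each to the claimed probability bound. Because $\hat{F}_Y$ is supported on $\{Y_1,\ldots,Y_n\} \subseteq [a_Y, b_Y]$, the essential supremum of a $\hat{F}_Y$-distributed variable is at most $b_Y$ and its infimum is at least $a_Y$, so that replacing the support constants of the auxiliary by $b_Y$ (upper bound case) or by $a_{\min} = \min(a_X, a_Y)$ (case 2(b)) only loosens the bounds in the direction consistent with the claim.

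The main obstacle is really bookkeeping rather than new mathematics: one must verify that the four sub-case decomposition of Theorem~\ref{thm:cvar_bound_v2} propagates cleanly under the substitution $(\epsilon, b_Y, a_Y) \mapsto (\epsilon', b_Y, a_Y)$ and under the change of auxiliary $F_Y \mapsto \hat{F}_Y$. Particular care is needed at the regime boundaries $\alpha = \epsilon'$ and $\alpha + \epsilon' = 1$, which depend on the random quantity $\eta$ only through the deterministic quantity $\epsilon'$, so the case selection on the right-hand side of each inequality is itself deterministic; and at the clipping step, where $\epsilon + \eta > 1$ forces $\epsilon' = 1$ and collapses the non-trivial sub-cases into their trivial counterparts, which one must confirm remains compatible with the statement. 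No probabilistic tools beyond DKW, nor any analytic tools beyond those already used in Theorem~\ref{thm:cvar_bound_v2}, are required.
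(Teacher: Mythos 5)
Your proposal is correct and follows essentially the same route as the paper: condition on the DKW event that $\hat{F}_Y$ is within $\eta$ of $F_Y$, combine this with the assumed $\epsilon$-discrepancy between $F_Y$ and $F_X$ via the triangle inequality to obtain a discrepancy of $\epsilon'$ between $\hat{F}_Y$ and $F_X$, apply Theorem~\ref{thm:cvar_bound_v2} deterministically on that event, and conclude by the law of total probability. One minor slip: you define $\mathcal{E}$ via the two-sided supremum $\sup_z|\hat{F}_Y(z)-F_Y(z)|\leq\eta$ and assert $\Pr(\mathcal{E})\geq 1-\delta$, but with $\eta=\sqrt{\ln(1/\delta)/(2n)}$ the two-sided DKW inequality only gives $1-2\delta$; since each direction of the theorem needs only the corresponding one-sided deviation, using the one-sided DKW events separately (as the paper does) repairs this at no cost.
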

\begin{proof}
	We begin by establishing that $\epsilon'$ bounds the distributional discrepancy between $\hat{F}_Y^L$ and $F_X$, under the assumption that $\sup_x \big( \hat{F}_Y^L(x) - F_Y(x) \big) \leq \epsilon$. Let $x\in \mathbb{R}$,
	\begin{equation}
		\begin{aligned}
			&\hat{F}_Y^L(x)-F_X(x)=\hat{F}_Y^L(x)-F_Y(x) + F_Y(x)-F_X(x) \\
			&\leq |\hat{F}_Y^L(x)-F_Y(x)| + |F_Y(x)-F_X(x)| \leq \epsilon + \eta=\epsilon'.
		\end{aligned}
	\end{equation}
	Assume that $\alpha > \epsilon'$. Note that, conditional on the event $\sup_{z \in \mathbb{R}} \big( \hat{F}_Y(z) - F_X(z) \big) \leq \epsilon'$,
	the upper bound in Theorem~\ref{thm:cvar_bound_v2} holds deterministically. Consequently, the probability that
	\begin{equation}
		CVaR_\alpha(X) \leq \frac{\epsilon'}{\alpha}max(b_X,b_Y) 
		+ (1 - \frac{\epsilon'}{\alpha})CVaR_{\alpha-\epsilon'}^{\hat{F}_Y^L}
	\end{equation}
	holds is equal to one. From the law of total probability, 
	\begin{equation}
		\begin{aligned}
			&P\Big(CVaR_\alpha(X) \leq \frac{\epsilon'}{\alpha}max(b_X,b_Y) + (1 - \frac{\epsilon'}{\alpha})CVaR_{\alpha-\epsilon'}^{\hat{F}_Y^L}\Big) \\
			&=P\Big(CVaR_\alpha(X) \leq \frac{\epsilon'}{\alpha}max(b_X,b_Y) + (1 - \frac{\epsilon'}{\alpha})CVaR_{\alpha-\epsilon'}^{\hat{F}_Y^L} \\
			&\Big|\sup_{z \in \mathbb{R}} \big( \hat{F}_Y(z) - F_X(z) \big) \leq \epsilon'\Big) 
			P(\sup_{z \in \mathbb{R}} \big( \hat{F}_Y(z) - F_X(z) \big) \leq \epsilon') \\
			&+P\Big(CVaR_\alpha(X) \leq \frac{\epsilon'}{\alpha}max(b_X,b_Y) + (1 - \frac{\epsilon'}{\alpha})CVaR_{\alpha-\epsilon'}^{\hat{F}_Y^L} \\
			&\Big|\sup_{z \in \mathbb{R}} \big( \hat{F}_Y(z) - F_X(z) \big) > \epsilon'\Big) 
			P(\sup_{z \in \mathbb{R}} \big( \hat{F}_Y(z) - F_X(z) \big) > \epsilon')\\
			&=P(\sup_{z \in \mathbb{R}} \big( \hat{F}_Y(z) - F_X(z) \big) \leq \epsilon') \\
			&+P\Big(CVaR_\alpha(X) \leq \frac{\epsilon'}{\alpha}max(b_X,b_Y) + (1 - \frac{\epsilon'}{\alpha})CVaR_{\alpha-\epsilon'}^{\hat{F}_Y^L} \\
			&\Big|\sup_{z \in \mathbb{R}} \big( \hat{F}_Y(z) - F_X(z) \big) > \epsilon'\Big) 
			P(\sup_{z \in \mathbb{R}} \big( \hat{F}_Y(z) - F_X(z) \big) > \epsilon') \\
			&\geq P(\sup_{z \in \mathbb{R}} \big( \hat{F}_Y(z) - F_X(z) \big) \leq \epsilon')
		\end{aligned}
	\end{equation}
	From DKW \cite{DKW_inequality} inequality the following inequalities can be derived \cite{pmlr-v97-thomas19a}
	\begin{equation}\label{eq:helper_2}
		\Pr \left( \sup_{x \in \mathbb{R}} \big( \hat{F}(x) - F(x) \big) 
		\leq \sqrt{\frac{\ln(1/\delta)}{2n}} \right) \geq 1 - \delta,
	\end{equation}
	\begin{equation}
		\Pr \left( \sup_{x \in \mathbb{R}} \big( \hat{F}(x) - F(x) \big) 
		\geq \sqrt{\frac{\ln(1/\delta)}{2n}} \right) \geq 1 - \delta.
	\end{equation}
	\begin{equation}
		\begin{aligned}
			&P(\sup_{z \in \mathbb{R}} \big( \hat{F}_Y(z) - F_X(z) \big) \leq \epsilon') \\
			&\geq P(\sup_{z \in \mathbb{R}} \big( \hat{F}_Y(z) - F_Y(z) \big) + \sup_{z \in \mathbb{R}} \big( F_Y(z) - F_X(z) \big)\leq \epsilon') \\
			&\geq P(\sup_{z \in \mathbb{R}} \big( \hat{F}_Y(z) - F_Y(z) \big) + \epsilon \leq \epsilon + \eta) \\
			&=P(\sup_{z \in \mathbb{R}} \big( \hat{F}_Y(z) - F_Y(z) \big)\leq \eta) > 1-\delta.
		\end{aligned}
	\end{equation}
	The first inequality follows from the triangle inequality; the second holds since $\epsilon$ bounds the distributional discrepancy between $X$ and $Y$; and the third follows from \eqref{eq:helper_2}.
	
	As with the preceding equations, all bounds in Theorem~\ref{thm:cvar_bound_v2} hold deterministically for a given distributional discrepancy, where the discrepancy between $\hat{F}_Y$ and $F_X$ is $\epsilon'$. Consequently, the remaining probabilistic guarantees hold with probability at least $1 - \delta$.
\end{proof}

The parameter $\epsilon$ in Theorem \ref{thm:x_concentration_bound_asymptotic_convergence} captures the distributional discrepancy between $X$ and $Y$, consistent with its role in Theorem \ref{thm:cvar_bound_v2}. Additionally, the theorem introduces $\epsilon'$ to represent the discrepancy between $F_X$ and $\hat{F}_Y$, where $\hat{F}_Y$ denotes the empirical CDF of $Y$ constructed from the sample $\{Y_i\}_{i=1}^n$. The parameter $\eta$ accounts for the additional distributional discrepancy beyond $\epsilon$, and captures the estimation error in approximating $F_Y$ using the empirical sample $\{Y_i\}_{i=1}^n$. By combining Theorem \ref{thm:cvar_bound_v2} with the DKW inequality \cite{DKW_inequality}, we obtain probabilistic guarantees for the estimated bounds.

%\begin{figure}[t]
%	\centering
%	\includegraphics[width=0.4\textwidth]{bounded_actions_plot.png}
%	\caption{
%		Illustration of risk-averse action selection via CVaR bounds.
%	}
%	\label{fig:bounded_actions_plot}
%\end{figure}

\section{Computationally Efficient Sampling}
In situations where the CVaR of a random variable $X$ is computationally intractable, the bounds derived in the preceding sections may serve as a practical alternative for efficient estimation. Specifically, one may select an auxiliary random variable $Y$ that is more amenable to sampling, and employ the established bounds to approximate the CVaR of $X$ using samples drawn from $Y$. This approach is particularly relevant in settings such as online planning, where computational efficiency is critical \cite{Lev-Yehudi_Barenboim_Indelman_2024, ijcai2022p637}.

\section{Experiments}
In this simulation study, we demonstrate how the CVaR of an inaccessible or computationally expensive distribution can be bounded using an auxiliary distribution that is more tractable. We then verify that the concentration bounds for $CVaR_\alpha(X)$ established in Theorem~\ref{thm:ecdf_cvar_bound} coincide with those of \cite{pmlr-v97-thomas19a}, which arise as a special case of our more general results. In our experiments, both approaches exhibit identical empirical performance.
%\begin{figure*}
%	\centering
%	\subfloat[]{\includegraphics[width=0.4\textwidth]{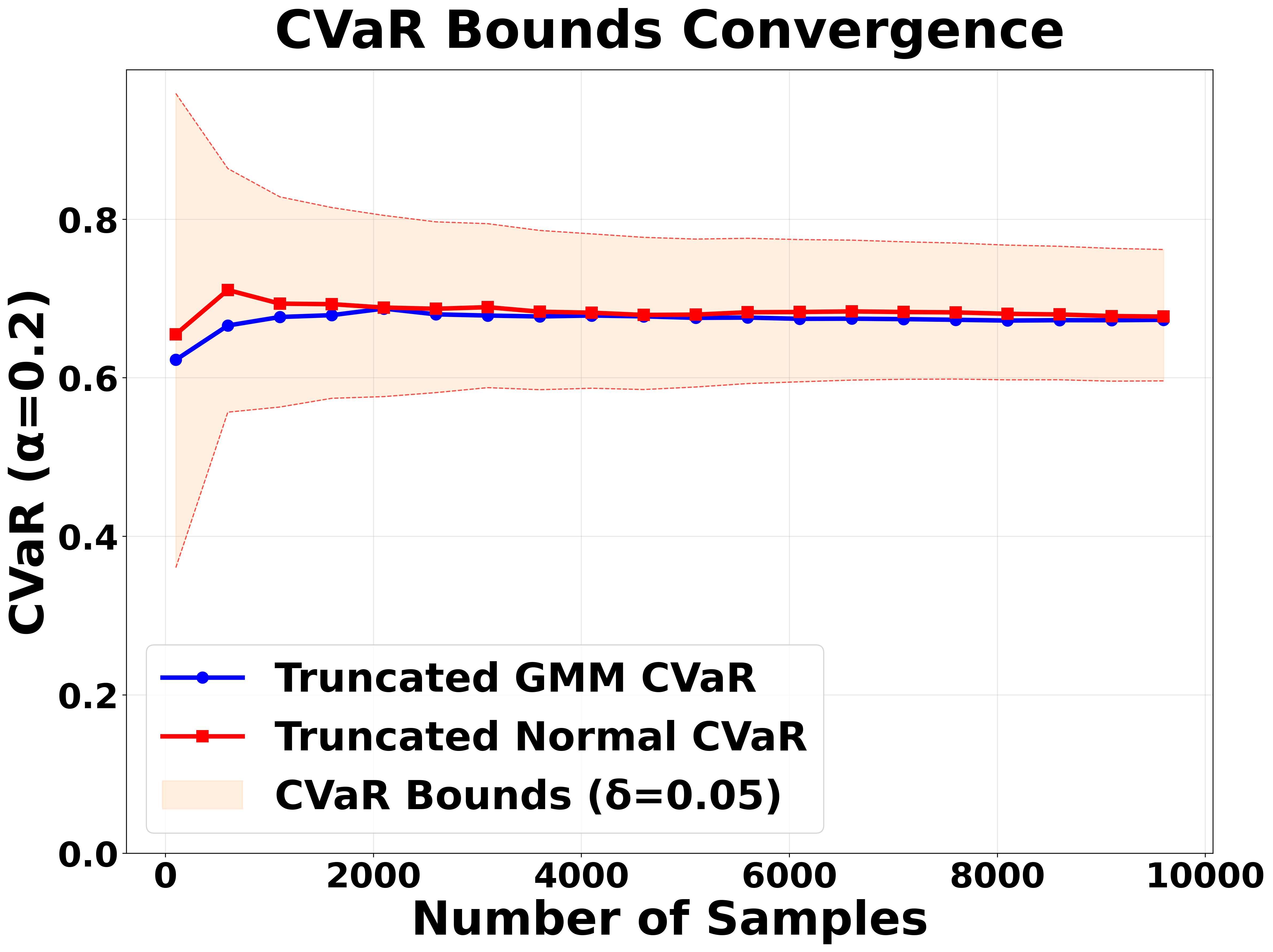}\label{fig:cvar_bounds_convergence}}	\subfloat[]{\includegraphics[width=0.4\textwidth]{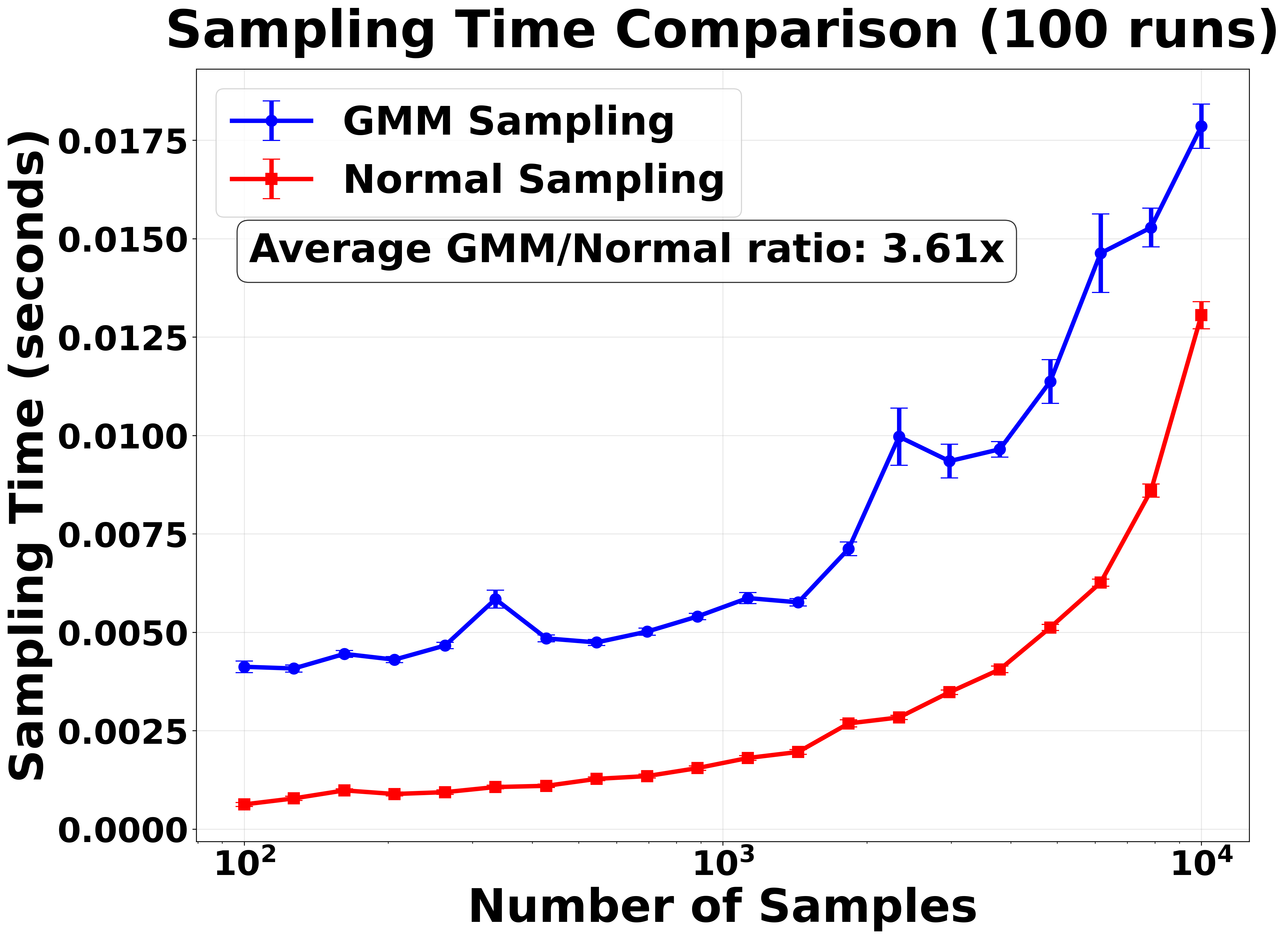}\label{fig:cvar_bounds_running_time}}
%	\caption{\KP{[add]}}
%\end{figure*}
\begin{figure*}[t]
	\centering
	\subfloat[]{\includegraphics[width=0.3\textwidth]{cvar_convergence_with_bounds.png}\label{fig:cvar_bounds_convergence}}
	\subfloat[]{\includegraphics[width=0.3\textwidth]{sampling_time_comparison.png}\label{fig:sampling_running_time}}
	\subfloat[]{\includegraphics[width=0.3\textwidth]{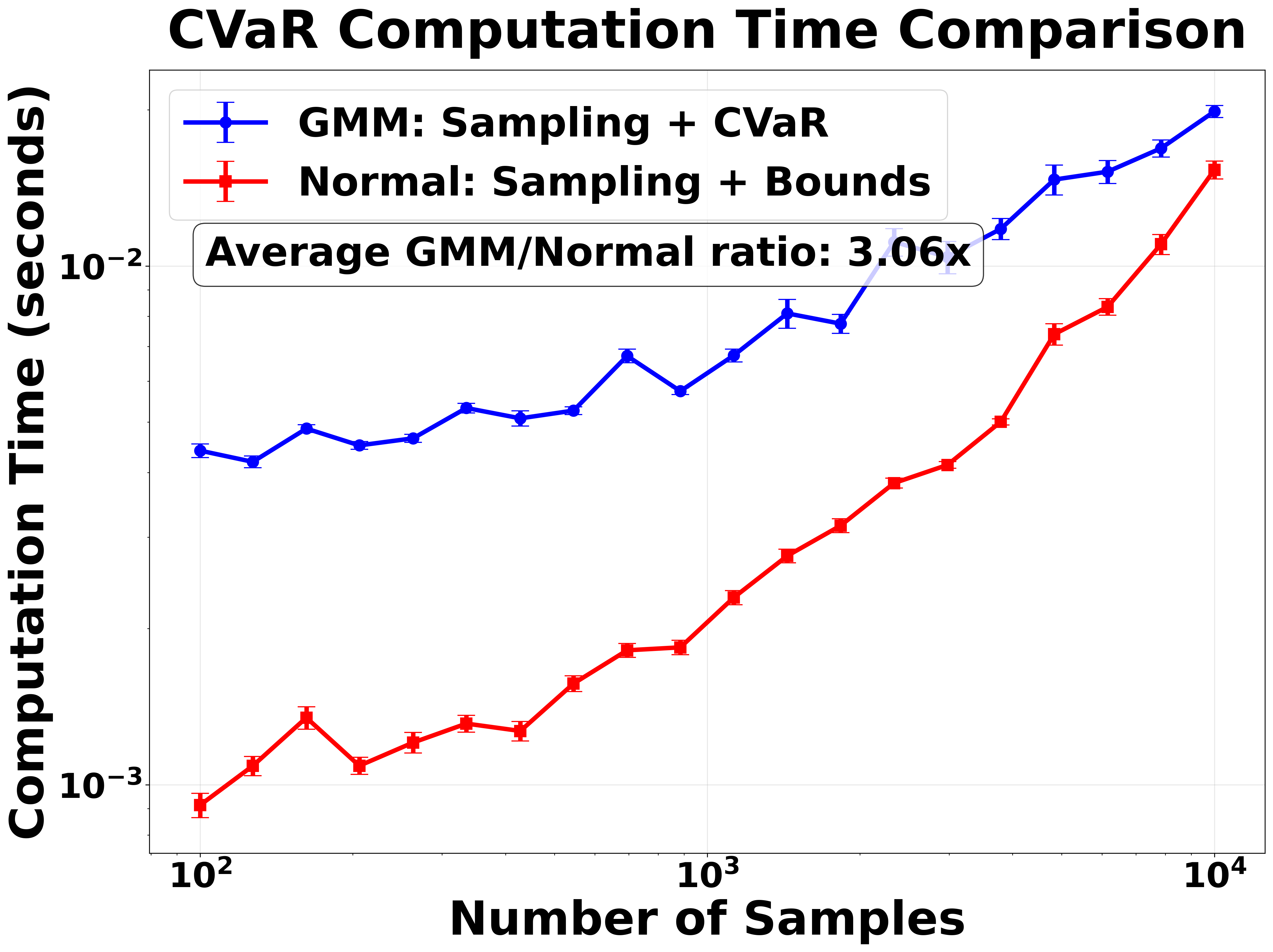}\label{fig:cvar_time_comparison_1}}
	\caption{Figure \ref{fig:cvar_bounds_convergence} shows the bounds established in Theorem \ref{thm:ecdf_y_bound_cvar_x}, demonstrating how a simple truncated Normal distribution can be employed to bound the CVaR of a truncated GMM. Figure \ref{fig:sampling_running_time} presents a comparison of the sampling times for each distribution, while Figure \ref{fig:cvar_time_comparison_1} reports the total computation time required to sample from the truncated GMM and estimate its CVaR, in contrast to sampling from the truncated Normal surrogate and computing the associated concentration bounds. All of the experiments were configured with confidence level $\alpha=0.2$, and probability of error $\delta=0.05$. Both Figures \ref{fig:sampling_running_time} and \ref{fig:cvar_time_comparison_1} display 95\% confidence intervals around the empirical means.}
\end{figure*}

Specifically, we compare a truncated Gaussian Mixture Model (GMM) and its corresponding truncated Normal approximation for CVaR estimation under bounded support. Such a setting arises, for example, in POMDP planning, where a computationally expensive model used by the agent during online planning is replaced with a more tractable surrogate model, thereby improving the agent’s decision-making speed \cite{Lev-Yehudi_Barenboim_Indelman_2024}. In these settings, the agent’s decisions may rely on bounds for the original value function that are derived from the tractable surrogate model \cite{ijcai2022p637}.

The GMM consists of five components with means $\mu_1 = 0.2$, $\mu_2 = -0.2$, $\mu_3 = -0.5$, $\mu_4 = 0.5$, $\mu_5 = 0.0$, variances $\sigma_1^2 = 0.5$, $\sigma_2^2 = 0.2$, $\sigma_3^2 = 0.1$, $\sigma_4^2 = 0.1$, $\sigma_5^2 = 0.3$, and weights $w_1 = 0.6$, $w_2 = 0.4$, $w_3 = 0.1$, $w_4 = 0.1$, $w_5 = 0.8$. The Normal approximation matches the GMM's mean and variance but cannot reproduce its multi-modal structure or outlier effects. All samples are truncated to the interval $[-1, 1]$, which reshapes the tails and slightly distorts boundary components. We compute CVaR at the 20\% quantile for sample sizes ranging from 100 to 10{,}000, with 100 independent repetitions per setting to ensure statistical reliability. The distributional discrepancy ($\epsilon$ in Theorem \ref{thm:ecdf_y_bound_cvar_x}) between the truncated GMM and the truncated Normal distribution is assessed via simulation, by estimating their respective cumulative distribution functions over a common set of bins and computing the maximum difference across all bins. This setup enables a direct assessment of the trade-off between computational efficiency and statistical accuracy when approximating a complex truncated mixture by a single truncated Normal distribution.

Figure \ref{fig:sampling_running_time} presents the sampling time comparison between the truncated GMM and the truncated Normal distribution, with an observed average time ratio of approximately 3.6 in favor of the Normal distribution. Figure \ref{fig:cvar_time_comparison_1} reports a total computational speedup of approximately 3.17 when comparing the process of sampling from the truncated GMM and estimating its CVaR to that of sampling from the truncated Normal and computing both upper and lower bounds as given in Theorem \ref{thm:ecdf_y_bound_cvar_x}. Figure \ref{fig:cvar_bounds_convergence} illustrates the convergence behavior of the CVaR bounds as a function of the number of samples, comparing estimates obtained from the truncated GMM with bounds derived from the truncated Normal surrogate. It is important to note that these bounds are obtained without requiring full knowledge of the underlying GMM distribution. Instead, they rely solely on the discrepancy between the corresponding CDFs. 

We evaluated the concentration bounds established by \cite{pmlr-v97-thomas19a} alongside our proposed bounds from Theorem \ref{thm:ecdf_cvar_bound} on a set of probability distributions: $\operatorname{Beta}(2, 2)$, $\operatorname{Beta}(0.5, 0.5)$, $\operatorname{Beta}(2, 5)$, $\operatorname{Beta}(5, 2)$, $\operatorname{Beta}(10, 2)$, $\operatorname{Beta}(2, 10)$, and the Laplace$(0, 1)$ distribution. These distributions were selected to match those used in the original study by \cite{pmlr-v97-thomas19a}, enabling a direct comparison under identical conditions. For each distribution, our bounds precisely coincide with those reported by \cite{pmlr-v97-thomas19a}, resulting in complete overlap between the two sets of bounds. The graphs exhibiting these results are available in the supplemental material.

\section{Conclusions}
This work presented interpretable concentration inequalities for $CVaR_\alpha(X)$, expressed in terms of the CVaR of an auxiliary random variable $Y$. The proposed bounds were validated through simulation studies, effectively bounding the CVaR of a truncated GMM by employing a less complex truncated Normal distribution. A comprehensive theoretical framework was developed for bounding CVaR using both uniform and non-uniform bounds on the cumulative and density functions of $X$ and $Y$. Notably, the state-of-the-art bounds introduced by \cite{pmlr-v97-thomas19a} emerge as a special case of the proposed general framework, which also offers a novel interpretation of their result. 

\bibliography{aaai2026}

\section{Supplemental Material}

\textbf{Note:} The theorems in this supplementary material have been reindexed; therefore, references to theorem numbers pertain only to the numbering used within this section. Consequently, the same theorem may be assigned a different number here than in the main paper.

\section{Proofs}
\addtocounter{section}{1}
\begin{theorem}\label{thm:cvar_bound_v2}
	Let $X$ and $Y$ be random variables and $\epsilon \in [0, 1]$. 
	\begin{enumerate}
		\item \textbf{Upper bound:} assume that $P(X\leq b_X)=1, P(Y\leq b_Y)=1$ and $\forall z\in \mathbb{R},F_Y(z)-F_X(z)\leq \epsilon$, then 
		\begin{enumerate}
			\item If $\alpha > \epsilon$, \begin{equation}\label{eq:general_cvar_upper_bound_1}
				CVaR_\alpha(X) \leq \frac{\epsilon}{\alpha}max(b_X,b_Y) + (1 - \frac{\epsilon}{\alpha})CVaR_{\alpha-\epsilon}(Y).
			\end{equation}
			
			\item If $\alpha \leq \epsilon$, then $CVaR_\alpha(X)\leq max(b_X,b_Y)$.
		\end{enumerate}
		
		\item \textbf{Lower Bound:} If $P(X\geq a_X)=1, P(Y\geq a_Y)=1$ and $\forall z\in \mathbb{R},F_X(z)-F_Y(z)\leq \epsilon$, then
		\begin{enumerate}
			\item If $\alpha+\epsilon \leq 1$, \begin{equation}\label{eq:general_cvar_lower_bound_1}
				CVaR_{\alpha}(X)\geq (1+\frac{\epsilon}{\alpha})CVaR_{\alpha+\epsilon}(Y)-\frac{\epsilon}{\alpha} CVaR_{\epsilon}(Y)
			\end{equation}
			
			\item If $\alpha+\epsilon > 1$ and $a_{min} = min(a_X, a_Y)$, \begin{equation}
				CVaR_\alpha(X) \geq \mathbb{E}[Y] - \epsilon CVaR_\epsilon(Y) + (\alpha+\epsilon-1)a_{min}
			\end{equation}
		\end{enumerate}
	\end{enumerate}
\end{theorem}
\begin{proof}
	The strategy of the proof is to construct two distributions derived from $F_Y$, denoted $F_Y^L$ and $F_Y^U$, such that $F_X$ is stochastically bounded between them; that is, $F_Y^L \leq F_X \leq F_Y^U$. Consequently, since CVaR is a coherent risk measure, it follows that
	\begin{equation}
		CVaR_\alpha^{F_Y^L} \leq CVaR_\alpha^{F_X} \leq CVaR_\alpha^{F_Y^U},
	\end{equation}
	where $CVaR_\alpha^{F_Y^L}$, $CVaR_\alpha^{F_X}$, and $CVaR_\alpha^{F_Y^U}$ denote the CVaR at level $\alpha$ corresponding to the distributions $F_Y^L$, $F_X$, and $F_Y^U$, respectively.
	
	Let $a_{\min} = \min(a_X, a_Y)$ and $b_{\min} = \min(b_X, b_Y)$. Define the interval $[b_{\min}, b_X)$ to be $\emptyset$ if $b_{\min} = b_X$, and equal to $[b_Y, b_X)$ otherwise. Analogously, define $[a_{\min}, a_X)$ in the same manner. We then define upper and lower bounds for $F_Y$ as follows (see Figure \ref{fig:CDF_bounds}): \begin{equation}
		F_Y^U(y)=\begin{cases}
			0 & y< max(a_X,q_{1-\epsilon}^Y) \\
			min(F_Y(y)-\epsilon, 1-\epsilon) & y\in [max(a_X,q_{1-\epsilon}^Y), b_{min}) \\
			1-\epsilon & y\in [b_{min}, b_X) \\
			1 & y\geq max(b_X, b_Y),
		\end{cases}
	\end{equation}
	\begin{equation}
		F_Y^L(y)=\begin{cases}
			0 & y<a_{min}  \\
			\epsilon & y\in [a_{min}, a_Y) \\
			min(F_Y(y)+\epsilon, 1) & y\in [a_Y, min(q_{\epsilon}^Y, b_X)) \\
			1 & y\geq min(q_\epsilon^Y, b_X).
		\end{cases}
	\end{equation}
	As a first step, we verify that $F_Y^L$ and $F_Y^U$ are valid CDFs and that they satisfy $F_Y^L \leq F_X \leq F_Y^U$. To establish that $F$ is a CDF, it suffices to verify the following properties:
	\begin{enumerate}
		\item $F$ is non-decreasing;
		\item $F : \mathbb{R} \to [0,1]$ with $\lim_{x \to \infty} F(x) = 1$ and $\lim_{x \to -\infty} F(x) = 0$;
		\item $F$ is right-continuous.
	\end{enumerate}
	\textbf{Proof that $F_Y^L$ is a CDF:} \begin{enumerate}
		\item On the interval $[a_Y, min(q_{1-\epsilon}^Y, b_X))$, the function $F_Y^L$ is monotone increasing, since $F_Y$ is monotone increasing by virtue of being a CDF. Outside this interval, $F_Y^L$ is constant and consequently preserves monotonicity.
		
		\item By definition, $lim_{y\rightarrow \infty}F_Y^L(y)=1$ and $lim_{y\rightarrow -\infty}F_Y^L(y)=0$. We need to show that $F_Y^L$ is bounded between 0 and 1. By its definition, $F_Y^L$ is bounded between 0 and 1.
		
		\item Within the interval $[a_Y, min(q_{\epsilon}^Y, b_X))$, $F_Y^L$ is right-continuous since $F_Y$ is right-continuous as a CDF. Outside this interval, $F_Y^L$ is constant and hence also right-continuous.
	\end{enumerate}
	\textbf{Proof that $F_Y^U$ is a CDF:} \begin{enumerate}
		\item On the interval $[max(a_X,q_{1-\epsilon}^Y), b_{min})$, the function $F_Y^U$ is monotone increasing, since $F_Y$ is monotonically increasing by virtue of being a CDF. Outside this interval, $F_Y^U$ is constant and consequently preserves monotonicity.
		\item By definition, $lim_{y\rightarrow \infty}F_Y^U(y)=1$ and $lim_{y\rightarrow -\infty}F_Y^U(y)=0$. By its definition, $F_Y^U$ is bounded between 0 and 1.
		\item Within the interval $[max(a_X,q_{1-\epsilon}^Y), b_{min})$, $F_Y^U$ is right-continuous since $F_Y$ is right-continuous as a CDF. Outside this interval, $F_Y^U$ is constant and hence also right-continuous.
	\end{enumerate}
	\textbf{Proof that $F_Y^L \leq F_X \leq F_Y^U$:} To establish that $F_Y^L \leq F_X \leq F_Y^U$, we need to show that for all $z \in \mathbb{R}$, $F_Y^L(z) \geq F_X(z) \geq F_Y^U(z)$. Let $z\in \mathbb{R}$. \begin{itemize}
		\item If $z<a_{min}$ then $F_Y^L(z)=0 = F_X(z)$.
		\item If $z\in [a_{min}, a_Y)$ then $F_X(y)=F_X(y)-F_Y(y)+F_Y(y)\leq |F_X(y)-F_Y(y)| + F_Y(y)\leq \epsilon + F_Y(y)=\epsilon=F_Y^L(y)$.
		\item If $z\in [a_Y, min(q_\epsilon^Y, b_X))$ we assume that $F_Y(z)+\epsilon\leq 1$ because otherwise $F_Y^L(z)=1$ and the inequality holds. $F_Y^L(z)=F_Y(z) + \epsilon = F_Y(z)-F_X(z)+F_X(z)+\epsilon \geq F_X(z) - |F_Y(z)-F_X(z)| + \epsilon\geq F_X(z)$.
		\item If $z\geq min(q_\epsilon^Y, b_X)$ then $F_Y^L(z)=1\geq F_X(z)$.
	\end{itemize} 
	and therefore $F_Y^L\leq F_X$. Note that the last proof holds when $b_X=\infty$, making $F_Y^L$ a valid CDF when the support of $X$ is not bounded from above.
	\begin{itemize}
		\item If $z<max(a_X,q_{1-\epsilon}^Y)$ then $F_Y^U(z)=0\leq F_X(z)$.
		\item If $z\in [max(a_X,q_{1-\epsilon}^Y), b_{min})$ then $F_Y^U(z)\leq F_Y(z)-\epsilon=F_Y(z)-F_X(z)+F_X(z)-\epsilon \leq |F_Y(z)-F_X(z)|+F_X(z)-\epsilon \leq \epsilon+F_X(z)-\epsilon= F_X(z)$.
		\item If $z \in [b_{min}, b_X)$ then $F_Y^U(z)=1-\epsilon=F_Y(z)-\epsilon=F_Y(z)-\epsilon+F_X(z)-F_X(z)\leq F_X(z)-\epsilon+|F_Y(z)-F_X(z)|\leq F_X(z)$
	\end{itemize}
	and therefore $F_X\leq F_Y^U$. Note that the last proof holds when $a_X=-\infty$, making $F_Y^U$ a valid CDF when the support of $X$ is not bounded from below.
	
	Next, we derive bounds for the CVaR associated with $F_Y^U$ and $F_Y^L$.
	
	\textbf{Upper bound for $CVaR_\alpha^{F_Y^U}$:} From \cite{acerbi2002coherence}, CVaR is equal to an integral of the VaR
	\begin{equation}
		\begin{aligned}
			&CVaR_\alpha^{F_Y^U}=\frac{1}{\alpha}\int_{1-\alpha}^1 \inf \{y\in \mathbb{R}:F_Y^U(y)\geq \tau\} d\tau \\
			&=\frac{1}{\alpha}\int_{1-\alpha+\epsilon}^{1+\epsilon} \inf \{y\in \mathbb{R}:F_Y^U(y)\geq \tau -\epsilon\} d\tau
		\end{aligned}
	\end{equation}  
	If $\alpha \leq \epsilon$, then $1 - \alpha + \epsilon \geq 1$, rendering the bound in the preceding equation trivial, as it attains the maximum value of the support of both $X$ and $Y$.
	\begin{equation}\label{eq:helper1}
		\begin{aligned}
			&\frac{1}{\alpha}\int_{1-\alpha+\epsilon}^{1+\epsilon} \inf \{y\in \mathbb{R}:F_Y^U(y)\geq \tau -\epsilon\} \\
			&\leq \frac{1}{\alpha}\int_{1-\alpha+\epsilon}^{1+\epsilon} max(b_X, b_Y)d\tau = max(b_X, b_Y).
		\end{aligned}
	\end{equation}
	That is, $CVaR_\alpha(X)\leq max(b_X, b_Y)$.
	
	If $\alpha > \epsilon$, then $1 - \alpha + \epsilon < 0$, and the integral may be decomposed into one term that is trivially bounded by the maximum of the support and another term that can be computed explicitly
	\begin{equation}
		\begin{aligned}
			&CVaR_\alpha^{F_Y^U}=\frac{1}{\alpha}\int_{1-\alpha+\epsilon}^{1+\epsilon} \inf \{y\in \mathbb{R}:F_Y^U(y)\geq \tau -\epsilon\} d\tau \\
			&=\frac{1}{\alpha}[\underbrace{\int_{1}^{1+\epsilon} \inf \{y\in \mathbb{R}:F_Y^U(y)\geq \tau -\epsilon\} d\tau}_{\triangleq A_1} \\
			&+\underbrace{\int_{1-\alpha+\epsilon}^{1} \inf \{y\in \mathbb{R}:F_Y^U(y)\geq \tau -\epsilon\} d\tau}_{\triangleq A_2}]
		\end{aligned}
	\end{equation}
	The term $A_1$ is bounded, in a manner analogous to \eqref{eq:helper1}, by $\epsilon \max(b_X, b_Y)$, which is essentially the tightest bound attainable given the definition of $F_Y^U$. The term $A_2$ can be expressed in terms of the CVaR of $Y$, evaluated at a shifted confidence level. Specifically, the variable $\tau - \epsilon$ in $A_2$ lies within the interval $[1 - \alpha, 1 - \epsilon]$. Over this range, for all $y \in [\max(a_X, q_{1-\epsilon}^Y), b_{\min}]$, the inequality $F_Y^U(y) \leq F_Y(y) - \epsilon$ holds. This follows because if $q_{1-\epsilon}^Y \geq a_X$, then by definition $F_Y^U(y) = min(F_Y(y) - \epsilon, 1-\epsilon)$, and if $a_X > q_{1-\epsilon}^Y$, then $F_Y^U(y)=0 \leq F_Y(y) - \epsilon$ for $y\in [q_{1-\epsilon}^Y, a_X]$ and again $F_Y^U(y) = min(F_Y(y) - \epsilon, 1-\epsilon)$ for $y\in [a_X, b_{min}]$.
	\begin{equation}
		\begin{aligned}
			&A_2 \leq \int_{1-\alpha+\epsilon}^1 \inf \{y\in \mathbb{R}:F_Y(y) - \epsilon \geq \tau -\epsilon\}d\tau \\
			&=\int_{1-(\alpha-\epsilon)}^1 \inf \{y\in \mathbb{R}:F_Y(y) \geq \tau\}d\tau \\
			&=(\alpha-\epsilon)CVaR_{\alpha-\epsilon}(Y).
		\end{aligned}
	\end{equation}
	Note that the confidence level is valid, as $\alpha, \epsilon \in (0,1)$ and $\alpha > \epsilon$ imply $\alpha-\epsilon \in (0, 1)$. By combining the previous two expressions, we obtain a single bound:
	\begin{equation}
		CVaR_\alpha^{F_Y^U} \leq \epsilon\frac{max(b_X,b_Y)}{\alpha} + (1 - \frac{\epsilon}{\alpha})CVaR_{\epsilon-\alpha}(Y).
	\end{equation}
	In the case where $a_X = -\infty$, we have $\max(a_X, q_{1-\epsilon}^Y) = q_{1-\epsilon}^Y$, and the definition of $F_Y^U$ no longer involves $a_X$. Therefore, the inequality remains valid in the case of $a_X = -\infty$.
	
	\textbf{Lower bound for $CVaR_\alpha^{F_Y^L}$:} From \cite{acerbi2002coherence}, CVaR is equal to an integral of the VaR
	\begin{equation}
		\begin{aligned}
			&CVaR_\alpha^{F_Y^L}=\frac{1}{\alpha}\int_{1-\alpha}^1 \inf \{y\in \mathbb{R}:F_Y^L(y)\geq \tau\} d\tau \\
			&=\frac{1}{\alpha}\int_{1-(\epsilon+\alpha)}^{1-\epsilon} \inf \{y\in \mathbb{R}:F_Y^L(y)\geq \tau +\epsilon\} d\tau.
		\end{aligned}
	\end{equation}
	It holds that $F_Y^L(y)\leq F_Y(y)+\epsilon$ and therefore
	\begin{equation}
		\begin{aligned}
			&CVaR_\alpha^{F_Y^L} \geq \frac{1}{\alpha}\int_{1-(\epsilon+\alpha)}^{1-\epsilon} \inf \{y\in \mathbb{R}:F_Y(y)+\epsilon \geq \tau +\epsilon\} d\tau \\
			&=\frac{1}{\alpha}\int_{1-(\epsilon+\alpha)}^{1-\epsilon} \inf \{y\in \mathbb{R}:F_Y(y)\geq \tau\} d\tau \\
			&=\frac{1}{\alpha}[\int_{1-(\epsilon+\alpha)}^{1} \inf \{y\in \mathbb{R}:F_Y(y)\geq \tau\} d\tau \\
			&-\int_{1-\epsilon}^{1} \inf \{y\in \mathbb{R}:F_Y(y)\geq \tau\} d\tau] \\
			&=\frac{1}{\alpha}[(\alpha+\epsilon)CVaR_{\alpha+\epsilon}(Y)-\epsilon CVaR_{\epsilon}(Y)].
		\end{aligned}
	\end{equation}
	In the case where $\epsilon+\alpha > 1$,
	\begin{equation}
		\begin{aligned}
			&CVaR_\alpha^{F_Y^L}=\frac{1}{\alpha}\int_{1-(\epsilon+\alpha)}^{1-\epsilon} \inf \{y\in \mathbb{R}:F_Y^L(y)\geq \tau +\epsilon\} d\tau \\
			&=\frac{1}{\alpha}[\underbrace{\int_{0}^{1-\epsilon} \inf \{y\in \mathbb{R}:F_Y^L(y)\geq \tau +\epsilon\} d\tau}_{\triangleq A_3} \\
			&+\underbrace{\int_{1-(\epsilon+\alpha)}^{0} \inf \{y\in \mathbb{R}:F_Y^L(y)\geq \tau +\epsilon\} d\tau}_{\triangleq A_4}].
		\end{aligned}
	\end{equation}
	
	\begin{equation}
		\begin{aligned}
			&A_3 \geq \int_{0}^{1-\epsilon} \inf \{y\in \mathbb{R}:F_Y(y)\geq \tau\} d\tau \\
			&=\int_{0}^{1} \inf \{y\in \mathbb{R}:F_Y(y)\geq \tau\} d\tau \\
			&-\int_{1-\epsilon}^{1} \inf \{y\in \mathbb{R}:F_Y(y)\geq \tau\} d\tau \\
			&=\mathbb{E}[Y] - \epsilon CVaR_\epsilon(Y)
		\end{aligned}
	\end{equation}
	In the case of $A_4$, $\tau$ ranges from $1-\alpha$ to $\epsilon$, and therefore 
	\begin{equation}
		\begin{aligned}
			&A_4=\int_{1-\alpha}^{\epsilon} \inf \{y\in \mathbb{R}:F_Y^L(y)\geq \tau\} d\tau \geq (\alpha+\epsilon-1)a_{min}
		\end{aligned}
	\end{equation}
	By combining the last equations to one bound we get \begin{equation}
		\begin{aligned}
			CVaR_\alpha(X)&\geq CVaR_\alpha^{F_Y^L} \\
			&\geq \mathbb{E}[Y] - \epsilon CVaR_\epsilon(Y) + (\alpha+\epsilon-1)a_{min}.
		\end{aligned}
	\end{equation}
\end{proof}

\begin{theorem}\label{thm:cvar_bound_convergence}
	Given the definition of $X,Y,\epsilon$ and $\alpha$ as in Theorem \ref{thm:cvar_bound_v2}, the lower and upper bounds of Theorem \ref{thm:cvar_bound_v2} converge to $CVaR_\alpha(X)$ as $\epsilon \rightarrow 0$.
\end{theorem}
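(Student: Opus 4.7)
The plan is to show that each bound established in Theorem~\ref{thm:cvar_bound_v2} differs from $\operatorname{CVaR}_\alpha(X)$ by a quantity vanishing as $\epsilon \to 0$. Since $\alpha \in (0,1)$ is fixed, for every sufficiently small $\epsilon > 0$ we have $\alpha > \epsilon$ and $\alpha + \epsilon \le 1$, so only the non-degenerate branches of the theorem are relevant.

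The first step is a quantile-comparison lemma: from the joint hypothesis $|F_X(z) - F_Y(z)| \le \epsilon$ (which is implicit when one invokes both the upper and the lower bounds of Theorem~\ref{thm:cvar_bound_v2}) together with right-continuity of CDFs, I would derive $q_{\tau-\epsilon}^X \le q_\tau^Y \le q_{\tau+\epsilon}^X$ for all $\tau \in (\epsilon, 1-\epsilon)$. Since $q_\cdot^X$ is monotone and therefore continuous at all but a countable set of $\tau$, this forces $q_\tau^Y \to q_\tau^X$ for Lebesgue-a.e. $\tau$ as $\epsilon \to 0$, while the quantiles remain uniformly bounded by $\max(b_X, b_Y)$ above (and by $\min(a_X, a_Y)$ below when relevant).

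Denote the upper bound by $U(\epsilon) = (\epsilon/\alpha)\max(b_X, b_Y) + (1 - \epsilon/\alpha)\operatorname{CVaR}_{\alpha-\epsilon}(Y)$. I would decompose $U(\epsilon) - \operatorname{CVaR}_\alpha(X) = (\epsilon/\alpha)(\max(b_X, b_Y) - \operatorname{CVaR}_\alpha(X)) + (1 - \epsilon/\alpha)(\operatorname{CVaR}_{\alpha-\epsilon}(Y) - \operatorname{CVaR}_\alpha(X))$. The first summand vanishes since $\epsilon/\alpha \to 0$ and the parenthesized quantity is finite. For the second, using Acerbi's identity $\operatorname{CVaR}_\beta(Z) = \beta^{-1}\int_{1-\beta}^1 q_\tau^Z\,d\tau$, the normalization, integration limits, and integrand $q_\tau^Y$ all converge to their counterparts for $\operatorname{CVaR}_\alpha(X)$, and dominated convergence — justified by the uniform quantile bound from the previous step — yields the limit. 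The lower bound is handled identically: writing $L(\epsilon) = (1 + \epsilon/\alpha)\operatorname{CVaR}_{\alpha+\epsilon}(Y) - (\epsilon/\alpha)\operatorname{CVaR}_\epsilon(Y)$, the same Acerbi argument shows the first term tends to $\operatorname{CVaR}_\alpha(X)$, while the correction $(\epsilon/\alpha)\operatorname{CVaR}_\epsilon(Y)$ is dominated by $(\epsilon/\alpha)\max(b_X, b_Y)$ and vanishes directly.

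The main obstacle is the quantile-convergence step, since $q_\cdot^X$ may have jumps at atoms of $F_X$ that preclude pointwise convergence of $q_\tau^Y \to q_\tau^X$ on a (countable, thus Lebesgue-null) exceptional set, and since the integration limits in $\operatorname{CVaR}_{\alpha \pm \epsilon}(Y)$ themselves drift by $\epsilon$. Both issues are resolved by splitting the relevant integral into the bulk on which $\tau$ ranges in a fixed subinterval of $[1-\alpha, 1]$, where dominated convergence applies with $\max(|a_{\min}|, b_{\max})$ as dominator, and a boundary strip of length $\epsilon$ whose contribution is automatically $O(\epsilon)$ because the integrand is bounded. Together these pieces drive $U(\epsilon), L(\epsilon) \to \operatorname{CVaR}_\alpha(X)$.
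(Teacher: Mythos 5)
Your proof is correct and takes a genuinely different, more self-contained route than the paper's. The paper's argument is essentially three lines: it asserts continuity of $\beta \mapsto \operatorname{CVaR}_\beta(Y)$ so that $\operatorname{CVaR}_{\alpha\mp\epsilon}(Y)\to\operatorname{CVaR}_\alpha(Y)$, uses the product rule for limits to kill the $\epsilon/\alpha$-weighted terms, and concludes that both bounds tend to $\operatorname{CVaR}_\alpha(Y)$; the identification of this limit with $\operatorname{CVaR}_\alpha(X)$ is left implicit, resting on the earlier remark that $\epsilon=0$ forces $F_X=F_Y$. You instead work at the level of quantile functions: the sandwich $q^X_{\tau-\epsilon}\le q^Y_\tau\le q^X_{\tau+\epsilon}$, almost-everywhere convergence of monotone generalized inverses, and dominated convergence in Acerbi's integral with a separate $O(\epsilon)$ boundary strip. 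This buys two things the paper's proof does not deliver: the continuity of CVaR in the confidence level is proved rather than assumed, and the limit is shown directly to be $\operatorname{CVaR}_\alpha(X)$, an argument that remains valid when $Y=Y_\epsilon$ varies with $\epsilon$ (the only reading under which your a.e.-convergence machinery is actually needed, since for a fixed $Y$ the two-sided hypothesis for all $\epsilon>0$ already forces $F_X=F_Y$ and the claim is immediate). Two small caveats, neither affecting the substance: in the upper-bound branch no lower bound on the supports is assumed, so your dominating constant should be $\max\big(|q^X_{1-\alpha-\epsilon_0}|,\,b_X,\,b_Y\big)$ on the fixed range $\tau\in[1-\alpha-\epsilon_0,1]$ rather than $\max(|a_{\min}|,b_{\max})$; and in the lower-bound branch $Y$ need not be bounded above, so the vanishing of $(\epsilon/\alpha)\operatorname{CVaR}_\epsilon(Y)=\alpha^{-1}\int_{1-\epsilon}^1 F_Y^{-1}(\tau)\,d\tau$ should be justified by integrability of $F_Y^{-1}$ (i.e., $E|Y|<\infty$) rather than by a bound $b_{\max}$ --- a point the paper's own proof also glosses over.
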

\begin{proof}
	Let $f$ and $g$ be continuous functions such that $f(0)$ and $g(0)$ are finite. Then
	$\lim_{x \to 0} f(x) g(x) = \big( \lim_{x \to 0} f(x) \big) \big( \lim_{x \to 0} g(x) \big)$.
	This property will be used throughout the proof.
	
	Assume that $F_Y(z) - F_X(z) \leq \epsilon$ for all $z \in \mathbb{R}$. Since we consider the limit as $\epsilon \to 0$, we further assume that $\alpha > \epsilon$ when computing the bound. Noting that CVaR is continuous with respect to the confidence level, it follows that $\lim_{\epsilon \to 0} \mathrm{CVaR}_{\alpha - \epsilon}(Y) = \mathrm{CVaR}_{\alpha}(Y)$.
	\begin{equation}
		\begin{aligned}
			&\lim_{\epsilon\rightarrow 0} \frac{\epsilon}{\alpha}max(b_X,b_Y) + (1 - \frac{\epsilon}{\alpha})CVaR_{\alpha-\epsilon}(Y) \\
			&=\lim_{\epsilon \rightarrow 0} (1 - \frac{\epsilon}{\alpha})CVaR_{\alpha-\epsilon}(Y) \\
			&=\lim_{\epsilon \rightarrow 0} 1 - \frac{\epsilon}{\alpha} \lim_{\epsilon\rightarrow 0} CVaR_{\alpha-\epsilon}(Y)=CVaR_\alpha(Y)
		\end{aligned}
	\end{equation}
	
	If $F_X(z) - F_Y(z) \leq \epsilon$ for all $z \in \mathbb{R}$, then, since we consider the limit as $\epsilon \to 0$, we assume in the derivation of the bound that $\alpha + \epsilon \leq 1$.
	\begin{equation}
		\begin{aligned}
			&\lim_{\epsilon\rightarrow 0} (1+\frac{\epsilon}{\alpha})CVaR_{\alpha+\epsilon}(Y)-\frac{\epsilon}{\alpha} CVaR_{\epsilon}(Y) \\
			&=\lim_{\epsilon\rightarrow 0} 1+\frac{\epsilon}{\alpha} \lim_{\epsilon\rightarrow 0} CVaR_{\alpha+\epsilon}(Y) \\
			&-\lim_{\epsilon\rightarrow 0} \frac{\epsilon}{\alpha} \lim_{\epsilon \rightarrow 0} CVaR_{\alpha+\epsilon}(Y) \\
			&=CVaR_\alpha(Y).
		\end{aligned}
	\end{equation}
\end{proof}

\begin{theorem}
	\label{thm:ecdf_cvar_bound}
	Let $X$ a random variable, $\alpha\in (0, 1],\delta \in (0, 1), \epsilon=\sqrt{\ln(1/\delta)/(2n)}$. Let $X_1, \dots, X_n \overset{iid}{\sim} F_X$ be random variables that define the ECDF $\hat{F}_X$. 
	\begin{enumerate}
		\item \textbf{Upper Bound:} If $P(X\leq b)=1$, then 
		\begin{enumerate}
			\item If $\alpha > \epsilon$ then $P(CVaR_\alpha(X) \leq (1-\frac{\epsilon}{\alpha})C_{\alpha-\epsilon}^{\hat{F}_X} + \frac{\epsilon}{\alpha}b)> 1-\delta$.
			
			\item If $\alpha\leq \epsilon$ then $CVaR_\alpha(X)\leq b$
		\end{enumerate}
		
		\item \textbf{Lower Bound:} If $P(X\geq a)=1$, then 
		\begin{enumerate}
			\item If $\alpha+\epsilon < 1$, then $P(CVaR_\alpha(X)\geq (1+\frac{\epsilon}{\alpha})C_{\alpha+\epsilon}^{\hat{F}_X}-\frac{\epsilon}{\alpha}C_{\epsilon}^{\hat{F}_X})> 1-\delta$.
			
			\item If $\alpha+\epsilon \geq 1$, then $P(CVaR_\alpha(X) \geq \frac{1}{\alpha}[(\alpha+\epsilon - 1)a + E_{\hat{F}_X}[X] - \epsilon C_\epsilon^{\hat{F}_X}]) > 1-\delta$.
		\end{enumerate}
	\end{enumerate}
\end{theorem}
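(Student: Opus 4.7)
The plan is to derive Theorem~\ref{thm:ecdf_cvar_bound} as a direct corollary of Theorem~\ref{thm:cvar_bound_v2}, instantiating the auxiliary random variable $Y$ as one whose cumulative distribution function equals the ECDF $\hat{F}_X$. Since every sample $X_i$ lies in $[a,b]$ with probability one, such a $Y$ satisfies $P(a\le Y\le b)=1$; accordingly, in the notation of Theorem~\ref{thm:cvar_bound_v2} one may take $a_X=a_Y=a$ and $b_X=b_Y=b$, so $\max(b_X,b_Y)=b$ and $a_{\min}=a$. Under this identification, $\operatorname{CVaR}_\beta(Y)=C_\beta^{\hat{F}_X}$ and $\mathbb{E}[Y]=\mathbb{E}_{\hat{F}_X}[X]$ for any $\beta$.

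The second step is to control the distributional discrepancy via the one-sided Dvoretzky--Kiefer--Wolfowitz inequality. Setting $\epsilon=\sqrt{\ln(1/\delta)/(2n)}$, DKW yields
\begin{equation}
P\Bigl(\sup_{z\in\mathbb{R}}\bigl(\hat{F}_X(z)-F_X(z)\bigr)\le\epsilon\Bigr)\ge 1-\delta,
\end{equation}
for the upper-bound direction, and the symmetric statement $P(\sup_z (F_X(z)-\hat{F}_X(z))\le\epsilon)\ge 1-\delta$ for the lower-bound direction. On either of these events the hypothesis of Theorem~\ref{thm:cvar_bound_v2} is satisfied with this very $\epsilon$, so the corresponding CVaR inequality stated there holds deterministically. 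Substituting $b_X=b_Y=b$ into the upper-bound conclusion when $\alpha>\epsilon$ produces $(\epsilon/\alpha)b+(1-\epsilon/\alpha)C_{\alpha-\epsilon}^{\hat{F}_X}$; when $\alpha\le\epsilon$ the bound degenerates to the trivial $b$, which in fact holds unconditionally because $P(X\le b)=1$. Substituting $a_{\min}=a$ into the lower-bound conclusion gives $(1+\epsilon/\alpha)C_{\alpha+\epsilon}^{\hat{F}_X}-(\epsilon/\alpha)C_\epsilon^{\hat{F}_X}$ when $\alpha+\epsilon<1$, and the $\mathbb{E}_{\hat{F}_X}[X]$-plus-correction form in the complementary case.

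Finally, converting a bound that holds deterministically on the DKW event into an unconditional probabilistic guarantee is done exactly as in the proof of Theorem~\ref{thm:ecdf_y_bound_cvar_x} in the main text: by the law of total probability, since the target inequality has conditional probability one on the DKW event and the DKW event itself has probability at least $1-\delta$, the unconditional probability of the target is likewise at least $1-\delta$.

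The main obstacle, such as it is, is careful bookkeeping rather than any new analytic difficulty: one must verify that setting $Y$ to follow $\hat{F}_X$ really does make the support constants in Theorem~\ref{thm:cvar_bound_v2} coincide with those of $X$, and one must match the expressions in each of the four cases (in particular the degenerate cases $\alpha\le\epsilon$ and $\alpha+\epsilon\ge 1$) to the general form given by Theorem~\ref{thm:cvar_bound_v2}. No new machinery beyond Theorem~\ref{thm:cvar_bound_v2} and the one-sided DKW inequality is required.
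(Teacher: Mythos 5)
Your proposal is correct and follows essentially the same route as the paper's own proof: instantiate the auxiliary variable $Y$ of Theorem~\ref{thm:cvar_bound_v2} with the ECDF $\hat{F}_X$, control the one-sided discrepancy by the DKW inequality with $\epsilon=\sqrt{\ln(1/\delta)/(2n)}$, and upgrade the bound that holds deterministically on the DKW event to an unconditional $1-\delta$ guarantee via the law of total probability. The only differences are presentational (the paper writes out the total-probability decomposition explicitly for each case), so nothing further is needed.
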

\begin{proof}
	From DKW inequality the following inequalities can be derived \cite{pmlr-v97-thomas19a}
	\begin{equation}
		\Pr \left( \sup_{x \in \mathbb{R}} \big( \hat{F}(x) - F(x) \big) 
		\leq \sqrt{\frac{\ln(1/\delta)}{2n}} \right) \geq 1 - \delta,
	\end{equation}
	\begin{equation}
		\Pr \left( \sup_{x \in \mathbb{R}} \big( \hat{F}(x) - F(x) \big) 
		\geq \sqrt{\frac{\ln(1/\delta)}{2n}} \right) \geq 1 - \delta.
	\end{equation}
	Let $\epsilon=\sqrt{\frac{\ln(1/\delta)}{2n}}$. By using Theorem \ref{thm:cvar_bound_v2} we get the following. 
	
	If $\alpha< \epsilon$ then \begin{equation}
		\begin{aligned}
			&P(CVaR_\alpha(X) \leq \frac{\alpha-\epsilon}{\alpha}C_{\alpha-\epsilon}^{\hat{F}_X} + \frac{\epsilon}{\alpha}b) \\ 
			&= \underbrace{P(C_\alpha(X) \leq \frac{\alpha-\epsilon}{\alpha}C_{\alpha-\epsilon}^{\hat{F}_X} + \frac{\epsilon}{\alpha}b\Bigr{|} \sup_{z\in\mathbb{R}}(\hat{F}_X-F_X)\leq \epsilon)}_{=1} \\ 
			&\times \underbrace{P(\sup_{z\in\mathbb{R}}(\hat{F}_X-F_X)\leq \epsilon)}_{>1-\delta} \\
			&+ \underbrace{P(C_\alpha(X) \leq \frac{\alpha-\epsilon}{\alpha}C_{\alpha-\epsilon}^{\hat{F}_X} + \frac{\epsilon}{\alpha}b\Bigr{|} \sup_{z\in\mathbb{R}}(\hat{F}_X-F_X)> \epsilon)}_{\geq 0} \\
			&\times \underbrace{P(\sup_{z\in\mathbb{R}}(\hat{F}_X-F_X)> \epsilon)}_{\geq 0} \\
			&> 1-\delta.
		\end{aligned}
	\end{equation}
	Observe that, conditional on the event 
	\[
	\sup_{z \in \mathbb{R}} \big( \hat{F}_X(z) - F_X(z) \big) \leq \epsilon,
	\]
	the bound in Theorem~\ref{thm:cvar_bound_v2} holds deterministically. Consequently, the probability that
	\[
	C_\alpha(X) \leq \frac{\alpha - \epsilon}{\alpha} C_{\alpha - \epsilon}^{\hat{F}_X} + \frac{\epsilon}{\alpha} b
	\]
	holds, given 
	\[
	\sup_{z \in \mathbb{R}} \big( \hat{F}_X(z) - F_X(z) \big) \leq \epsilon,
	\]
	is equal to one. The same observation is necessary through the rest of the proof in a similar manner.
	If $\alpha+\epsilon < 1$, then \begin{equation}
		\begin{aligned}
			&P(C_\alpha(X)\geq (1+\frac{\epsilon}{\alpha})C_{\alpha+\epsilon}^{\hat{F}_X}-\frac{\epsilon}{\alpha}C_{\epsilon}^{\hat{F}_X}) \\
			&= \underbrace{P(C_\alpha(X)\geq (1+\frac{\epsilon}{\alpha})C_{\alpha+\epsilon}^{\hat{F}_X}-\frac{\epsilon}{\alpha}C_{\epsilon}^{\hat{F}_X}\Bigr{|} \sup_{z\in\mathbb{R}}(F_X-\hat{F}_X)\leq \epsilon)}_{=1} \\ &\underbrace{P(\sup_{z\in\mathbb{R}}(F_X-\hat{F}_X)\leq \epsilon)}_{>1-\delta} \\
			&+\underbrace{P(C_\alpha(X)\geq (1+\frac{\epsilon}{\alpha})C_{\alpha+\epsilon}^{\hat{F}_X}-\frac{\epsilon}{\alpha}C_{\epsilon}^{\hat{F}_X}\Bigr{|} \sup_{z\in\mathbb{R}}(F_X-\hat{F}_X) > \epsilon)}_{\geq 0} \\
			&\times \underbrace{P(\sup_{z\in\mathbb{R}}(F_X-\hat{F}_X) > \epsilon)}_{\geq 0} \\
			&> 1-\delta
		\end{aligned}
	\end{equation}
	If $\alpha+\epsilon \geq 1$, then \begin{equation}
		\begin{aligned}
			&P(C_\alpha(X) \geq \frac{1}{\alpha}[(\alpha+\epsilon - 1)a + E_{\hat{F}_X}[X] - \epsilon C_\epsilon^{\hat{F}_X}]) \\ 
			&=\underbrace{P(C_\alpha(X) \geq \frac{1}{\alpha}[(\alpha+\epsilon - 1)a + E_{\hat{F}_X}[X]}_{=1} \\
			&\underbrace{- \epsilon C_\epsilon^{\hat{F}_X}]\Bigr{|} \sup_{z\in\mathbb{R}}(F_X-\hat{F}_X)\leq \epsilon)}_{=1} \\ 
			&\times \underbrace{P(\sup_{z\in\mathbb{R}}(F_X-\hat{F}_X) \leq \epsilon)}_{>1-\delta} \\
			&+\underbrace{P(C_\alpha(X) \geq \frac{1}{\alpha}[(\alpha+\epsilon - 1)a + E_{\hat{F}_X}[X]}_{\geq 0} \\
			&\underbrace{- \epsilon C_\epsilon^{\hat{F}_X}]\Bigr{|} \sup_{z\in\mathbb{R}}(F_X-\hat{F}_X) > \epsilon)}_{\geq 0} \\ 
			&\times \underbrace{P(\sup_{z\in\mathbb{R}}(F_X-\hat{F}_X) > \epsilon)}_{\geq 0} \\
			&> 1-\delta
		\end{aligned}
	\end{equation}
\end{proof}

\begin{theorem}\label{thm:tight_cvar_lower_bound}
	(Tighter CVaR Lower Bound) Let $\alpha \in (0,1)$, $X$ and $Y$ be random variables. Define the a random variable $Y^L$ such that $F_{Y^L}(y) \triangleq min(1, F_Y(y) + g(y))$ for $g:\mathbb{R}\rightarrow [0, \infty)$. Assume 
	$lim_{x \rightarrow -\infty}g(x)=0$, $g$ is continuous from the right and monotonic increasing.
	If $\forall x\in \mathbb{R}, F_X(x)\leq F_Y(x)+g(x)$, then $F_{Y^L}$ is a CDF and $CVaR_\alpha(Y^L)\leq CVaR_\alpha(X).$
\end{theorem}
\begin{proof}
	In order to prove that $F_{Y^L}$ is a CDF we need to prove that $F_{Y^L}$ is:
	\begin{enumerate}
		\item Monotonic increasing
		\item $F:\mathbb{R}:\rightarrow [0,1]$, $\lim_{x\rightarrow \infty}F_{Y^L}(x)=1$, $\lim_{x\rightarrow -\infty}F_{Y^L}(x)=0$
		\item Continuous from the right
	\end{enumerate}
	
	\textbf{Monotonic increasing:}
	Note that for every $f_i:\mathbb{R}\rightarrow \mathbb{R}, i=1,2$ that are monotonic increasing, $f_1(f_2(x)))$ is also monotonic increasing in x. Denote $f(x):=min(x,1)$ and $f_2(x):=F_Y(x)+g(x)$. $F_Y(x)$ is a CDF and therefore monotonic increasing, so $f_2(x)$ is monotonic increasing as a sum of monotonic increasing functions. $f_1$ is also monotonic increasing, and $F_{Y^L}(x)=f_1(f_2(x))$. Therefore $F_{Y^L}(x)$ is monotonic increasing. \\
	
	\textbf{Limits:} \begin{equation}
		\begin{aligned}
			1&\geq\lim_{x\rightarrow \infty} F_{Y^L}(x)=\lim_{x\rightarrow \infty} min(1,F_Y(x)+g(x)) \\
			&\geq \lim_{x\rightarrow \infty}min(1,F_Y(x))=\lim_{x\rightarrow \infty} F_Y(x)=1
		\end{aligned}
	\end{equation}
	and therefore $\lim_{x\rightarrow \infty} F_{Y^L}(x)=1$.
	$$\begin{aligned}
		0&\leq \lim_{x\rightarrow -\infty} F_{Y^L}(x)=\lim_{x\rightarrow -\infty} min(1,F_Y(x)+g(x)) \\
		&\leq \lim_{x\rightarrow -\infty} F_Y(x)+g(x) 
		=\lim_{x\rightarrow -\infty} F_Y(x)+\lim_{x\rightarrow -\infty} g(x) \\
		&=0	
	\end{aligned}$$
	and therefore $\lim_{x\rightarrow -\infty} F_{Y^L}(x)=0$. By definition $\forall x\in \mathbb{R},F_{Y^L}(x)\leq 1$, and $\forall x\in \mathbb{R},F_{Y^L}(x)\geq 0$ because both g and $F_{Y^L}$ are non negative functions. \\
	
	\textbf{Continuity from the right:}
	$F_Y$ is continuous from the right because it is a CDF, and therefore $F_{Y^L}$ is continuous from the right as a sum of continuous from the right functions. \\
	Thus, $F_{Y^L}$ is a CDF. \\
	
	\textbf{Bound proof:}
	If $Y^L\leq X$, then $CVaR_\alpha(Y^L)\leq CVaR_\alpha(X)$ because CVaR is a coherent risk measure. Note that 
	if $F_Y(x) + g(x)<1$ then
	$$F_X(x)\leq F_Y(x) + g(x)=F_{Y^L}(x),$$
	and if $F_Y(x) + g(x)\geq 1$, $1=F_{Y^L}(x)\geq F_X(x)$. Therefore $Y^L\leq X$.
\end{proof}

\begin{theorem}\label{thm:lower_cvar_bound_using_density_bound}
	Let $\alpha \in (0,1)$, $X$ and $Y$ random variables. Define $h:\mathbb{R}\rightarrow [0,\infty)$ to be a continuous function, $g(z):=\int_{-\infty}^z h(x)dx$ and $Y^L$ to be a random variable such that $F_{Y^L}(y):=\min(1, F_{Y}(y) + g(y))$. If $\lim_{z\rightarrow -\infty} g(z)=0$ and $\forall x\in \mathbb{R},f_x(x)\leq f_y(z) + h(x)$, then $F_{Y^L}$ is a CDF and $CVaR_\alpha(Y^L)\leq CVaR_\alpha(X)$.
\end{theorem}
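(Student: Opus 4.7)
My plan is to derive this result as an immediate corollary of Theorem~\ref{thm:tight_cvar_lower_bound} by verifying that the function $g(z) = \int_{-\infty}^z h(x)\,dx$ satisfies the hypotheses of that theorem, and by showing that the pointwise density bound $f_X \leq f_Y + h$ implies the pointwise CDF bound $F_X \leq F_Y + g$ required there.

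First I would check the structural properties of $g$. Non-negativity and monotone non-decrease follow directly from $h \geq 0$, since $g(z_2) - g(z_1) = \int_{z_1}^{z_2} h(x)\,dx \geq 0$ whenever $z_1 \leq z_2$. Continuity of $g$ (hence right-continuity) is a consequence of the fundamental theorem of calculus applied to the continuous integrand $h$. The vanishing limit $\lim_{z \to -\infty} g(z) = 0$ is assumed as a hypothesis.

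Next I would lift the density inequality to a CDF inequality. For any $x \in \mathbb{R}$, integrating $f_X(t) \leq f_Y(t) + h(t)$ over $(-\infty, x]$ yields
$$F_X(x) = \int_{-\infty}^x f_X(t)\,dt \leq \int_{-\infty}^x f_Y(t)\,dt + \int_{-\infty}^x h(t)\,dt = F_Y(x) + g(x).$$
Having verified all hypotheses, Theorem~\ref{thm:tight_cvar_lower_bound} then yields both that $F_{Y^L}$ is a valid CDF and that $\mathrm{CVaR}_\alpha(Y^L) \leq \mathrm{CVaR}_\alpha(X)$, completing the argument.

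The only delicate point is that $g$ may fail to be globally finite when $h$ is not integrable on the entire real line. This is not a true obstacle: whenever $g(x) = +\infty$, the inequality $F_X(x) \leq F_Y(x) + g(x)$ holds trivially, and the truncation $\min(1,\cdot)$ in the definition of $F_{Y^L}$ ensures that the bounding distribution remains well defined throughout the argument.
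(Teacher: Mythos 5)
Your proposal is correct and follows essentially the same route as the paper: both verify that $g(z)=\int_{-\infty}^{z}h(x)\,dx$ satisfies the hypotheses of Theorem~\ref{thm:tight_cvar_lower_bound} (monotonicity from $h\geq 0$, right-continuity via the fundamental theorem of calculus, the vanishing limit by assumption) and lift the density bound to the CDF bound by integration. Your additional remark about non-integrable $h$ is a harmless extra observation not present in the paper.
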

\begin{proof}
	We will show the g satisfies the properties of Theorem \ref{thm:tight_cvar_lower_bound}, and therefore this theorem holds. We need to prove that \begin{enumerate}
		\item $\lim_{z\rightarrow -\infty} g(z)=0$
		\item g is continuous from the right.
		\item g is monotonic increasing.
		\item $F_X(y)\leq F_Y(y) + g(y)$
	\end{enumerate}
	It is given in the theorem's assumptions that $\lim_{z\rightarrow -\infty} g(z)=0$, so (1) holds. h is non negative and therefore $g$ is monotonic increasing, so (3) holds. $g$ is continuous if its derivative exists for all $z\in \mathbb{R}$. Let $z\in \mathbb{R}$ and $a<z$.
	\begin{equation}
		\begin{aligned}
			\frac{d}{dz}g(z) &= \frac{d}{dz}\int_{-\infty}^z h(x)dx = \frac{d}{dz}[\int_{-\infty}^a h(x)dx + \int_{a}^z h(x)dx] \\
			&=\frac{d}{dz}\int_{a}^z h(x)dx = h(z)
		\end{aligned}
	\end{equation}
	where the third equality holds because $\int_{-\infty}^a h(x)dx = g(a)$ is a constant that does not depend on z. The last equality holds from the fundamental theorem of calculus because $h$ is continuous. Finally, (4) holds because \begin{equation}
		\begin{aligned}
			F_X(y) &\triangleq \int_{-\infty}^y f_x(x)dx\leq \int_{-\infty}^y f_y(x) + h(x)dx \\
			&\triangleq F_Y(y) + g(y).
		\end{aligned}
	\end{equation}
\end{proof}

\begin{corollary}\label{thm:x_concentration_bound_asymptotic_convergence}
	Let $X$ be a random variable, $\alpha\in (0, 1],\delta \in (0, 1), \epsilon=\sqrt{\ln(1/\delta)/(2n)}, a\in \mathbb{R}, b\in \mathbb{R}, \eta>0$. Let $X_1, \dots, X_n \overset{iid}{\sim} F_X$ be random variables that define the ECDF $\hat{F}_X$. Denote by $U(n)$ and $L(n)$ the upper and lower bounds respectively from Theorem \ref{thm:ecdf_cvar_bound}, where $n$ is the number of samples, then 
	\begin{enumerate}
		\item If $P(X\leq b)=1$, then $\lim_{n\rightarrow \infty} U(n)\overset{a.s}{=}CVaR_\alpha(X)$.
		\item If $P(X\geq a)=1$, then $\lim_{n\rightarrow \infty} L(n)\overset{a.s}{=}CVaR_\alpha(X)$.
	\end{enumerate}
	where a.s denotes almost sure convergence.
\end{corollary}
\begin{proof}
	\begin{equation}
		\begin{aligned}
			&\lim_{n \to \infty} U(n)\overset{a.s}{=} \lim_{n \to \infty} (1-\frac{\epsilon}{\alpha})C_{\alpha-\epsilon}^{\hat{F}_X} + \frac{\epsilon}{\alpha}b \\
			&\overset{a.s}{=}\lim_{n \to \infty} C_{\alpha-\epsilon}^{\hat{F}_X}=\lim_{n \to \infty} C_{\alpha}^{\hat{F}_X} = C_{\alpha}^{F_X}
		\end{aligned}
	\end{equation}
	where the first and second equalities follow from the fact that $\epsilon \to 0$ as $n \to \infty$; the third equality holds by the continuity of CVaR with respect to $\alpha$; and the fourth equality holds by the almost sure convergence of the empirical CVaR of i.i.d. samples to the true CVaR. For the same reason,
	\begin{equation}
		\begin{aligned}
			&\lim_{n\rightarrow \infty} L(n)\overset{a.s}{=} \lim_{n\rightarrow \infty} (1+\frac{\epsilon}{\alpha})C_{\alpha+\epsilon}^{\hat{F}_X}-\frac{\epsilon}{\alpha}C_{\epsilon}^{\hat{F}_X} \\
			&\overset{a.s}{=} \lim_{n\rightarrow \infty} C_{\alpha+\epsilon}^{\hat{F}_X}
			\overset{a.s}{=} \lim_{n\rightarrow \infty} C_{\alpha}^{\hat{F}_X}
			\overset{a.s}{=} \lim_{n\rightarrow \infty} C_{\alpha}^{F_X}.
		\end{aligned}
	\end{equation}
\end{proof}

\section{Our concentration bounds vs \cite{pmlr-v97-thomas19a}}
\begin{figure}[htbp]
	\centering
	\includegraphics[width=\textwidth]{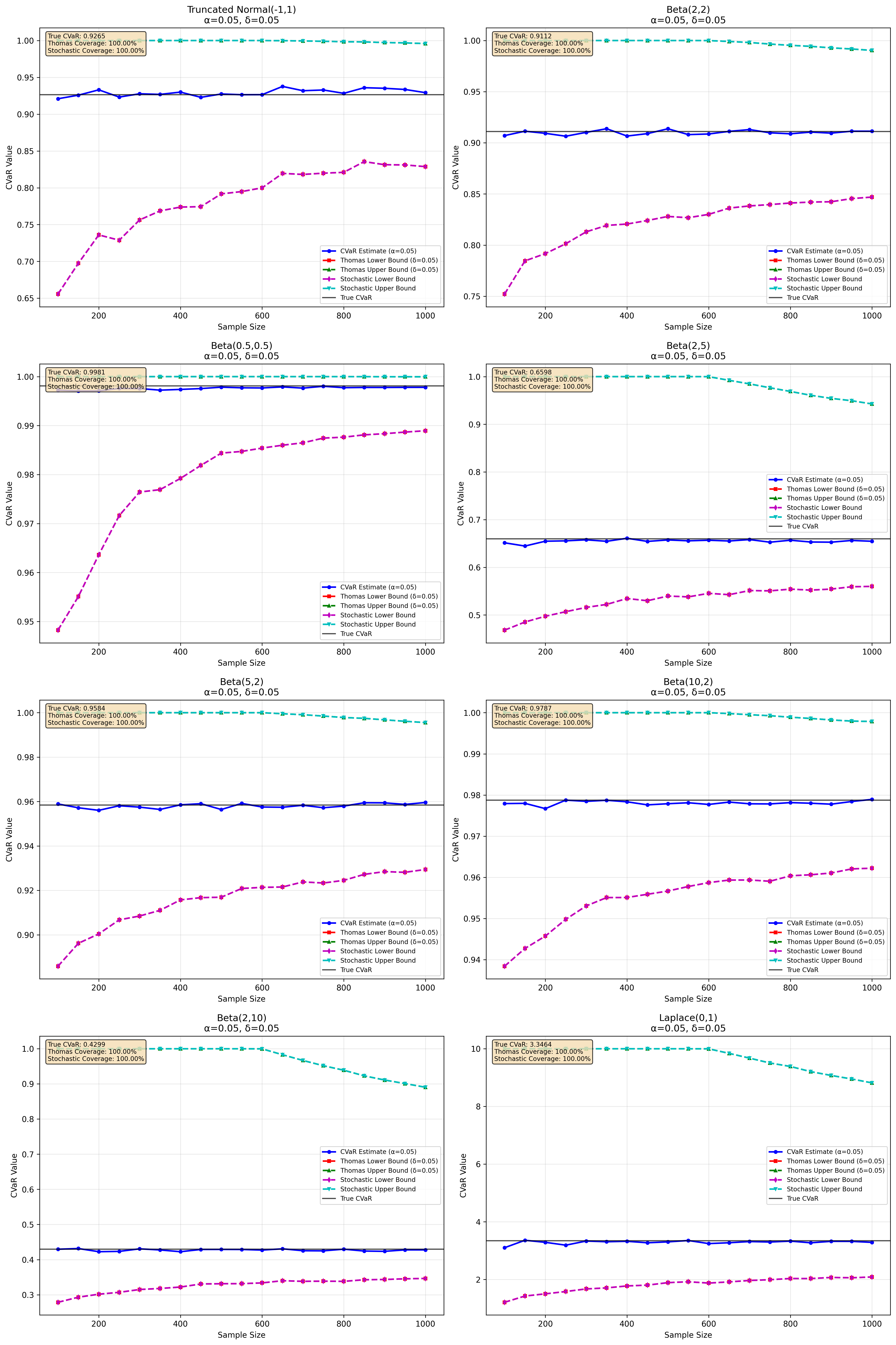}
	\caption{
		Comparison of concentration inequalities for CVaR.
		% Replace this with your actual caption.
	}
	\label{fig:cvar_bounds_comparison}
\end{figure}

\end{document}